\DeclareSymbolFont{tipa}{T3}{cmr}{m}{n}
\DeclareMathAccent{\everb}{\mathalpha}{tipa}{16} 
\newtheorem{theorem}{Theorem}[section]
\newtheorem*{theorem*}{Theorem}
\newtheorem{claim}{Claim}[theorem]
\newtheorem{lemma}[theorem]{Lemma}
\newtheorem{fact}{Fact}
\newtheorem{proposition}[theorem]{Proposition}
\newtheorem{corollary}[theorem]{Corollary}
\theoremstyle{definition}
\newtheorem*{definition*}{Definition}
\newtheorem{definition}[theorem]{Definition}
\theoremstyle{remark}
\newtheorem{remark}[theorem]{Remark}
\newtheorem{question}[theorem]{Question}
\newcommand{\nocontentsline}[3]{}
\newcommand{\notoc}[2]{\bgroup\let\addcontentsline=\nocontentsline#1{#2}\egroup}
\providecommand{\LyX}{\texorpdfstring%
	{L\kern-.1667em\lower.25em\hbox{Y}\kern-.125emX\@}
	{LyX}}
\DeclareMathOperator{\otp}{otp}
\DeclareMathOperator{\dom}{dom}
\DeclareMathOperator{\sk}{sk}
\def\moverlay{\mathpalette\mov@rlay}
\def\mov@rlay#1#2{\leavevmode\vtop{%
		\baselineskip\z@skip \lineskiplimit-\maxdimen
		\ialign{\hfil$\m@th#1##$\hfil\cr#2\crcr}}}
\newcommand{\charfusion}[3][\mathord]{
	#1{\ifx#1\mathop\vphantom{#2}\fi
			\mathpalette\mov@rlay{#2\cr#3}
		}
	\ifx#1\mathop\expandafter\displaylimits\fi}
\newcommand\zfc{\mathrm{ZFC}}
\newcommand\zf{\mathrm{ZF}}
\renewcommand\mid{\mathrel{|}\allowbreak}
\newcommand{\mets}{\mathord{\upharpoonright}}
\newcommand{\power}{\mathcal{P}}
\newcommand{\cof}[1]{\mathrm{cf}\left(#1\right)}
\newcommand{\cpow}[1]{\mathcal{P}_{\omega_{1}}(#1)}
\newcommand{\emp}{\varnothing}
\newcommand{\fii}{\varphi}
\newcommand{\till}{, \dots ,}
\newcommand{\smin}{\smallsetminus}
\newcommand\T{\mathfrak{T}}%
\newcommand\p{\mathbb{P}}%
\newcommand\qp{\mathbb{Q}}%
\newcommand\LL{\mathcal{L}}%
\newcommand\G{\mathfrak{G}}%
\newcommand{\psu}[2]{\prescript{#1}{}{#2}}%
\newcommand\ord{\mathrm{Ord}}%
\newcommand\hod{\mathrm{HOD}}%
\newcommand\lex{\mathrm{Lex}}%
\DeclareMathOperator{\supp}{supp}
\newcommand\con{\subseteq}%
\newcommand\spo{\vartriangleleft}%
\newcommand\pdwn{\mathord{\downarrow}}%
\newcommand\pup{\mathord{\uparrow}}%
\newcommand\der{\partial}%
\newcommand\Der{\bm{\partial}}%
\newcommand\bigast{\mathop{\mathlarger{\mathlarger{*}}}}%
\newcommand{\fin}[1]{[#1]^{<\omega}}
\newcommand{\meq}[1]{#1/{\sim}}
\newcommandx\sing[1][usedefault, addprefix=\global, 1=]{{\bf Sing}_{#1}}%
\newcommand\des[1]{\mathbf{Des}(#1)}%
\newcommand\nr[1]{\mathbf{NR}(#1)}%
\DeclareMathOperator{\Def}{Def}
\DeclareMathOperator{\aaa}{\mathtt{aa}}
\DeclareMathOperator{\stat}{\mathtt{stat}}
\begin{document}

\title{ITERATED CLUB SHOOTING AND THE STATIONARY LOGIC CONSTRUCTIBLE MODEL}
\author{UR YA'AR}
\address{Einstein Institute of Mathematics %
	Hebrew University of Jerusalem \newline%
	Current address: Department of Mathematics and Statistics, University of Helsinki}
\email{ur.yaar@proton.me}

\begin{abstract}
	We investigate iterating the construction of $C(\aaa)$, the $L$-like
	inner model constructed using stationary logic.
	We show that it is possible to force over generic extensions of $L$
	to obtain a model of $V=C(\aaa)$, and to obtain models in which the
	sequence of iterated $C(\aaa)$s is decreasing of arbitrarily large
	order-types. For this we prove distributivity and stationary-set preservation
	properties for countable iterations of club-shooting forcings using
	mutually stationary sets,
	and introduce the notion of mutually fat sets which yields better distributivity results even for uncountable iterations.
\end{abstract}

\keywords{Iterated forcing, Club shooting, Stationary logic,
	Inner model, Constructibility, Mutual stationarity, Square principle}

\subjclass[2020]{03E45,  	03E40 (Primary), 03E47(Secondary).}

\maketitle
\section{Introduction and preliminaries}

\subsection{Introduction}

The model $C(\aaa)$, introduced by Kennedy, Magidor and V\"{a}\"{a}n\"{a}nen
in \cite{IMEL2}, is the model of sets constructible using
\emph{stationary logic} $\LL(\aaa)$ -- first order logic augmented
with the quantifiers $\aaa$ and $\stat$,
meaning roughly ``for club/stationarily many countable subsets'' (see definition \ref{def:Stationary-logic} below).
This is a model of $\zf$, and one can phrase the
formula ``$V=C(\aaa)$'', i.e. $\forall x\exists\alpha(x\in L'_{\alpha})$
where $L'_{\alpha}$ is the $\alpha$th level in the construction
of $C(\aaa)$ (see definition \ref{def:Caa}). However, it is not always
true that $C(\aaa)\vDash``V=C(\aaa)"$,
which is equivalent to the question whether $C(\aaa)^{C(\aaa)}=C(\aaa)$. This is clearly the
case if $V=L$, so the interesting question is whether this can hold
with $C(\aaa)\ne L$. In section \ref{sec:coding} we show that this
is consistent relative to the consistency of $\zfc$. Next we investigate
the possibilities of $C(\aaa)\nvDash ``V=C(\aaa)"$. In such a case, it
makes sense to define recursively the iterated $C(\aaa)$s:
\begin{align*}
	C(\aaa)^{0}        & =V                                                                               \\
	C(\aaa)^{\alpha+1} & =C(\aaa)^{C(\aaa)^{\alpha}}\,\text{ for any \ensuremath{\alpha}}                 \\
	C(\aaa)^{\alpha}   & =\bigcap_{\beta<\alpha}C(\aaa)^{\beta}\,\,\text{ for limit \ensuremath{\alpha}}.
\end{align*}
This type of construction was first investigated by McAloon \cite{mcaloon2}
regarding $\mathrm{HOD}$, where he showed that it is equiconsistent
with $\zfc$ that there is a strictly decreasing sequence of iterated
$\mathrm{HOD}$ of length $\omega$, and the intersection of the sequence
can be either a model of $\zfc$ or of $\zf+\mathrm{\neg AC}$. Harrington
also showed (in unpublished notes, cf. \cite{Zadrozny-IteratingOrdinalDefinability1983}) that the
intersection might not even be a model of $\zf$. Jech \cite{jech1975descending}
used forcing with Suslin trees to show that it is possible to have
a strictly decreasing sequence of iterated $\mathrm{HOD}$ of any
arbitrary ordinal length, and later Zadro\.{z}ny \cite{zadrozny1981transfinite}
improved this to an $\ord$ length sequence. In \cite{Zadrozny-IteratingOrdinalDefinability1983} Zadro\.{z}ny
generalized McAloon's method and gave a more flexible framework for
coding sets by forcing, which he used to give another proof of this
result. As $\hod$ can also be described as the model constructed
using second order logic (as shown by Myhill and Scott \cite{MyhillScott}),
it is natural to ask which of the results for $\hod$ apply to other
such models

Our goal in this paper is to use Zadro\.{z}ny's framework in the context
of $C(\aaa)$, and our main challenges would be in finding the appropriate
coding tools for this case. As we are dealing with stationary logic,
the natural candidates are club shooting forcings. We will show how
to use such forcings to code sets into the $C(\aaa)$ construction
by choosing exactly which stationary sets we destroy (out of
a predetermined list). The first challenge would be to find a way
to iterate such forcings without destroying what we've already coded.
The second challenge would be to investigate the limit stages of the
iterated $C(\aaa)$ sequence. After some preliminaries regarding stationary
sets and intersections of generic extensions, in section \ref{subsec:Shooting-countably-many}
we use the notion of mutually stationary sets to form countable iterations
of club shooting forcings, which will allow us, in sections \ref{subsec:Coding-into-V=00003DC(aa)}
and \ref{subsec:Iterating-C(aa)}, to obtain models of $V=C(\aaa)\ne L$,
and models with descending sequences of iterated $C(\aaa)$s of countable
length.

In order to obtain longer descending sequences of iterated $C(\aaa)$s,
in section \ref{subsec:Shooting-uncountably-many} we introduce the
notion of \emph{mutually fat sets}, and show that these kinds of sets
allows us to form arbitrarily large iterations of club shooting forcings.
We then provide two ways of obtaining mutually fat sets -- using
specific $\square$-sequences (section \ref{subsec:square-sequence})
and by forcing non-reflecting stationary sets (section \ref{subsec:Forcing-non-reflecting-stationar}).
We eventually use the first option, in Theorem \ref{thm:descending-long},
to obtain $C(\aaa)$-sequences of any predetermined order-type.

These results should be contrasted with the case of $C^{*}$ -- the
model constructed from the cofinality $\omega$ logic. $C^{*}$ is
contained in $C(\aaa)$, but in \cite{Yaar-IteratingCofinalityoConstructible2023} we show
that having a model with a descending sequence of iterated $C^{*}$
is equiconsistent with the existence of an inner model with a measurable
cardinal, while here we obtain such models and even more over $L$.
This further demonstrates the difference in expressive power between
stationary logic and the cofinality-$\omega$ quantifier.

\subsection{\label{subsec:Stationary-sets}Stationary sets}

In this paper we will have two notions of club and stationary sets
-- one regarding ordinals, and one regarding countable subsets
of some given set. In most cases it will be clear from the context
which notion is used, and otherwise we will state it explicitly.
We first recall the definitions.
\begin{definition}\label{def:club-and-stationary}
	For an ordinal $\alpha$, a subset $C\con\alpha$ is called \emph{closed} if it contains all its limit points and it is \emph{club} if it is closed and unbounded in $\alpha$.
	$C$ is \emph{$\sigma$-closed} if it contains all limit points of its countable subsets.
	$S\con\alpha$ is called \emph{stationary} if it intersects every club
	in $\alpha$.
	For regular $\lambda<\kappa$ we denote by $E_{\lambda}^{\kappa}$ the set
	$\left\{ \alpha<\kappa\mid\cof{\alpha}=\lambda\right\} $
	and similarly for $\lambda\leq\kappa$ $E_{<\lambda}^{\kappa}=\left\{ \alpha<\kappa\mid\cof{\alpha}<\lambda\right\} $.
	It is well known that these are stationary sets.

	For an arbitrary set $X$, $C\con\cpow X$ is called \emph{club in
		$X$  }if there is some algebra $\mathfrak{A}=\left\langle X,f_{n}\right\rangle _{n<\omega}$
	(where $f_{n}:X^{k_n}\to X$ for some $k_n<\omega$) such that
	\[
		C=C_{\mathfrak{A}}:=\left\{ z\in\power_{\omega_{1}}(X)\mid\forall n(f_{n}''z^{k_n}\con z)\right\}
	\]
	i.e. the collection of all subsets of $X$ closed under all functions
	of $\mathfrak{A}$.
	$S\con \cpow{X}$ is called \emph{stationary
		in $\cpow X$} if for every algebra $\mathfrak{A}$ on $X$, $S\cap C_{\mathfrak{A}}\ne\emp$.
\end{definition}

Note that club subsets form a filter on $\power_{\omega_{1}}(X)$,
and the stationary sets are the positive sets with respect to this
filter. Using Skolem functions, the club filter is also generated
by club sets which consist of all \emph{elementary substructures} of some structure on $X$.
And similarly -- a stationary set is one that contains an elementary substructure of any structure on $X$.
The notions of club and stationary sets can also be defined using
single functions $F:\left[X\right]^{<\omega}\to X$, where a club
is the set of all countable sets closed under such $F$ (denoted $C_F$) and a stationary set is
such that for each such $F$ there is a member closed under $F$.
A useful characterization, which often is taken as the definition (see e.g. \cite{KunenNew, Jech}), is that
$C$ is club in $\power_{\omega_{1}}(X)$
iff it is closed under countable unions of chains,
and unbounded in the sense that for every $a\in\cpow X$ there is $c\in C$ such that $a\con c$
(the equivalence of the definitions is due to Menas \cite{Menas-StrongCompactnessSupercompactness1975}; see also \cite{jech2009stationary}).
If we say that some set of countable sets $S$ is stationary, without
mentioning any ambient set, we mean that it is stationary in $\cpow{\cup S}$.

The following lemma connects the two notions. For $A\con\kappa$ and
$B\con\cpow{\kappa}$ denote
\begin{align*}
	\everb{A} & =\left\{ X\in\cpow{\kappa}\mid\sup X\in A\right\} \\
	\breve{B} & =\left\{ \sup X \mid X\in B\right\} .
\end{align*}
Notice that if $A \con E^\kappa_{\omega}$ then $\breve{\everb{A}}=A$, and $\everb{\breve{B}}\supseteq B$,
but in the second case there might not be equality, as there may be more subsets of $\kappa$ having the same suprema as those in $B$.
\begin{lemma}
	\label{lem:correspond}Let $\kappa>\omega$ be regular, $A\con E_{\omega}^{\kappa}$,
	$B\con\cpow{\kappa}$.
	\begin{enumerate}
		\item \label{enu:ordclub->club}  $A$ is $\sigma$-closed and unbounded in $\kappa$ $\implies$$\everb{A}$
			      contains a club in $\cpow{\kappa}$.
		\item \label{enu:club->ordclub}  $\everb{A}$ is club in $\cpow{\kappa}$ $\implies$ $A$ is $\sigma$-closed
		      and unbounded in $\kappa$.
		\item \label{enu:stat->ordstat} $B$ is stationary in $\cpow{\kappa}$ $\implies$ $\breve{B}$ is
		      stationary in $\kappa$.
		\item \label{enu:corres-stat}$A$ is stationary in $\kappa$ $\iff$
		      $\everb{A}$ is stationary in $\cpow{\kappa}$.
	\end{enumerate}
\end{lemma}

\begin{proof}
	\noindent		\ref{enu:ordclub->club}.	If $A$ is $\sigma$-closed and unbounded,
	then $\everb{A}$ contains the set of all $X\in\cpow{\kappa}$ closed under the function $\alpha\mapsto\min A\smin\alpha$.

	\noindent		\ref{enu:club->ordclub}. If $\everb{A}$ is club  in $\cpow{\kappa}$
	then using the last characterization we gave, unboundedness of $\everb{A}$ clearly gives the unboundedness of $A$.
	Now let $\left<\alpha_n \mid n<\omega \right>$ be an increasing sequence of elements of $A$.
	For every  $n$ let $X_n \in \everb{A}$ such that $\sup X_n = \alpha_n$.
	Note that $\sup (X_n \cup X_{n+1})=\alpha_{n+1}$ so also $X_n \cup X_{n+1}\in \everb{A}$,
	hence we can assume without loss of generality that $X_n \con X_{n+1}$.
	Hence by closure of $\everb{A}$ under unions of chains, also $\bigcup_{n<\omega}X_n \in A$, which by definition means $\sup_{n<\omega} \alpha_n \in A$.

	\noindent\ref{enu:stat->ordstat}. 	Now assume $B$ is stationary in $\cpow{\kappa}$. If $C\con\kappa$
	is club, then by clause \ref{enu:ordclub->club} $\everb{C}$ contains a club in $\cpow{\kappa}$
	so there is some $X\in\everb{C}\cap B$, and $\sup X \in C\cap\breve{B}$.
	So $\breve{B}$ is stationary in $\kappa$.

	\noindent		\ref{enu:corres-stat}. 	Assume now $A$ is stationary in $\kappa$, and let $C\con\cpow{\kappa}$ be club.
	Let $F:[\kappa]^{<\omega}\to \kappa$ be such that $C=C_{F}$.
	The set of $\alpha<\kappa$  such that $F''[\alpha]^{<\omega} \con \alpha$ is club in $\kappa$, so there is such  $\alpha \in A$.
	As $A \con E^\kappa_\omega$, $\cof \alpha = \omega$, so let $X \con \alpha$ be a countable cofinal sequence.
	Then $\sup F''[X]^{<\omega} = \alpha \in A$ hence $F''[X]^{<\omega} \in C \cap \everb{A}$.
	$C$ was an arbitrary club, hence $\everb{A}$ is stationary in $\cpow{\kappa}$.
	On the other hand,
	if $\everb{A}$ is stationary in $\cpow{\kappa}$ then we've already
	shown that $A=\breve{\everb{A}}$ is stationary in $\kappa$.
\end{proof}

An important property of stationary sets in $\cpow{\kappa}$ is that
they project upwards and downwards, i.e. they form a \emph{tower}:
\begin{lemma}
	\label{lem:projections} Let $Y\supseteq X\ne\emp$.
	\begin{enumerate}
		\item If $S\con\power_{\omega_{1}}\left(Y\right)$ is stationary then $S\pdwn X:=\left\{ Z\cap X\mid Z\in S\right\} $
		      is stationary.
		\item If $S\con\power_{\omega_{1}}\left(X\right)$ is stationary then $S\pup Y:=\left\{ Z\in\power_{\omega_{1}}\left(Y\right)\mid Z\cap X\in S\right\} $
		      is stationary.
	\end{enumerate}
\end{lemma}

\begin{proof}
	1. Let $H:\left[X\right]^{<\omega}\to X$, define $\bar{H}:\left[Y\right]^{<\omega}\to Y$
	by $\bar{H}\left(\bar{a}\right)=H\left(\bar{a}\cap X\right)$, and
	let $\bar{Z}\in S$ closed under $\bar{H}$. Then  $Z\cap X\in S\pdwn X$
	is closed under $H$.

	\noindent 2. Let $G:\left[Y\right]^{<\omega}\to Y$. Given the variables $\left\langle x_{i}\mid i<\omega\right\rangle $,
	enumerate all terms in $G$ as $\left\langle t_{i}\mid i<\omega\right\rangle $
	such that $t_{i}$ is a term in the variables $x_{0}\till x_{i-1}$
	(usually there will be dummy variables), and wrap them all in one
	function $G':\left[Y\right]^{<\omega}\to Y$ where $G'\left(\bar{a}\right)$
	is $t_{i}\left(a_{0}\till a_{i-1}\right)$ for $i=\left|\bar{a}\right|$.
	Let $x\in X$ and define $G'':[X]^{<\omega} \to X$ by
	\[
		G''\left(\bar{a}\right)= \begin{cases}
			G'\left(\bar{a}\right) & \text{if } G'\left(\bar{a}\right)\in X \\
			x                      & \text{otherwise}
		\end{cases}
	\]
	Let $Z'\in S$ be closed under $G''$,
	and $Z$ the closure of $Z'$ under $G$.
	$Z'$ is countable so its closure is also countable.
	$Z\cap X=Z'$ since every $a\in Z\smin Z'$,
	is of the form $G'\left(\bar{b}\right)$
	for some $\bar{b}\in\fin {Z'}$, but if $G'\left(\bar{b}\right) \in X$, then by closure under	$G''$, it would be in $Z$.
	So $Z'\in S\pup Y$ and it is closed under $G$.
\end{proof}
The following lemmas are basic tools in preserving stationarity under
forcing:
\begin{definition}
	\label{def:generic-condition}Let $\p$ be a forcing notion, $p\in\p$
	and $a$ a countable set such that $\p,p\in a$. We say that $p'\in\p$
	is \emph{generic over} $a$ (or $a$-generic in short) if for every
	dense open set $D\con\p$ such that $D\in a$, $p'\in D$. $p'$ is
	generic over $a$ \emph{below $p$ }if in addition $p'\leq p$.
\end{definition}

\begin{lemma}
	\label{lem:stat-preserve} Let $\p$ be a forcing notion and $T\con\cpow{\kappa}$
	stationary. Let $\theta\geq\kappa$ be large so that $\p\in H(\theta)$.
	If for every $a\prec H(\theta)$ with $\p\in a$ such that $a\cap\kappa\in T$
	and every $p\in a$ there is an $a$-generic below $p$, then $\p\Vdash T$
	is stationary.
\end{lemma}

\begin{proof}
	Let $p\in\p$ and $\dot{F}$ a name for a function from $ [\kappa]^{<\omega}$
	to $\kappa$. Let $\mathfrak{A}=\langle H(\theta),\p,p,\dot{F}\rangle$
	for large enough $\theta$.
	By stationarity of $T$ and Lemma \ref{lem:projections}, there is $a\prec\mathfrak{A}$ such that $a\cap\kappa\in T$.
	By the assumption there is $p'\leq p$ $a$-generic.
	For every $x\in [a\cap\kappa]^{<\omega}$ there is in $a$ a dense subset of $\p$ determining the value of $\dot{F}(x)$ to be some member of $a$.
	$p'$ meets this set, so there is some $y\in a\cap\kappa$ such that $p'\Vdash\dot{F}(x)=y$.
	So $p'$ forces that $a\cap\kappa\in T$ is closed under $\dot{F}$.
	Hence for every $p\in\p$ there is $p'\leq p$ forcing that there is an element of $T$ closed under $\dot{F}$.
	$\dot{F}$ was arbitrary, so indeed $\p$ forces that $T$ is stationary.
\end{proof}
\begin{lemma}
	\label{lem:sigma-closed}If $\p$ is a $\sigma$-closed forcing and
	$T\con\cpow{\kappa}$ is stationary then $\p\Vdash T$ is stationary.
\end{lemma}

\begin{proof}
	Let $\theta\geq\kappa$ be large so that $\p\in H(\theta)$. Let $a\prec H(\theta)$
	with $\p\in a$ such that $a\cap\kappa\in T$ and $p\in a$. We show
	there is a generic over $a$ below $p$, which will be enough by the
	previous lemma. Since $a$ is countable, we can enumerate all its
	dense open sets $\left\langle D_{n}\mid n<\omega\right\rangle $,
	and inductively define a decreasing sequence of conditions in $\p\cap a$
	where $p_{0}=p$ and $p_{n+1}$ is chosen from $D_{n}$ below $p_{n}$
	(possible by denseness). By $\sigma$-closure there is $p'$ such
	that for every $n$, $p'\leq p_{n}$ and by openness, $p'\in D_{n}$
	for every $n$. So $p'$ is generic over $a$ below $p$.
\end{proof}

\subsection{\label{subsec:Stationary-logic}Stationary logic and its constructible
	model}
Stationary logic was introduced by Shelah in \cite{Shelah-GeneralizedQuantifiersCompact1975}, and first studied by Barwise, Kaufmann and Makkai in \cite{StatLogic}.
\begin{definition}
	\label{def:Stationary-logic}\emph{Stationary logic}, denoted $\LL(\aaa)$ ($\aaa$ stands for  ``almost all''),
	is the extension of first-order logic obtained by adding to the syntax:
	\begin{itemize}
		\item Second-order variables, usually denoted $s_i,t_j$ etc.
		\item Second-order quantifiers $\aaa$ and  $\stat$, each binding a single second-order variable.
	\end{itemize}
	As for semantics, second order variables are assumed to range over countable subsets.
	That is, the interpretation of an atomic formula of the form $\varphi=s(x)$, where $x$ is a first order variable and  $s$ a second order variable, in a model $\mathcal{M}$ with domain $M$, is given by assigning some $a\in M$ to $x$, some  $A\in \cpow M$ to  $s$, and letting
	\[
		\mathcal{M} \vDash_{\LL(\aaa)} \varphi(A,a) \iff a \in A.
	\]
	We might abuse notation and write $x \in s$ instead of  $s(x)$.
	%
	Logical connectives and first-order quantifiers are treated in the usual way, and the new quantifiers are given the following semantics:
	\begin{eqnarray*}
		\mathcal{M}\vDash_{\LL(\aaa)}\mathtt{aa}s\fii\left(s,\boldsymbol{t},\boldsymbol{a}\right)\iff & \mathclap{\underset{{\textstyle \text{contains a club in \ensuremath{\cpow{\ensuremath{M}}}}}}{\left\{ A\in\cpow M\mid\mathcal{M}\vDash_{\LL(\aaa)} \fii\left(A,\boldsymbol{t},\boldsymbol{a}\right)\right\} }}\\
		\mathcal{M\vDash_{\LL(\aaa)} \mathtt{stat}}s\fii\left(s,\boldsymbol{t},\boldsymbol{a}\right)\iff & \underset{{\textstyle \text{is stationary in \ensuremath{\cpow{\ensuremath{M}}}}}}{\left\{ A\in\cpow M\mid\mathcal{M}\vDash_{\LL(\aaa)} \fii\left(A,\boldsymbol{t},\boldsymbol{a}\right)\right\} }
	\end{eqnarray*}
	where $\boldsymbol{a}$ is a finite sequence of elements of $M$ and
	$\boldsymbol{t}$ a finite sequence of countable subsets of $M$.
	Note that
	\[
		\vDash_{\LL(\aaa)} \mathtt{stat}s \varphi(s, \boldsymbol{t}, \boldsymbol{a}) \leftrightarrow \neg\mathtt{aa}s\neg\fii\left(s,\boldsymbol{t},\boldsymbol{a}\right).
	\]
	We will usually drop the subscript from $\vDash_{\LL(\aaa)}$ if there's no risk of confusion.
\end{definition}

Following their general framework for models constructed from extended
logics of \cite{IMEL}, Kennedy Magidor and V\"{a}\"{a}n\"{a}nen introduce
the model constructed from stationary logic in \cite{IMEL2}:
\begin{definition}
	\label{def:Caa}$C\left(\aaa\right)$ is defined recursively by:
	\begin{align*}
		L'_{0}        & =\emp                                                                       \\
		L'_{\alpha+1} & =\Def_{\LL(\aaa)}\left(L'_{\alpha}\right)                                   \\
		L_{\beta}'    & =\bigcup_{\alpha<\beta}L_{\alpha}'\,\,\,\text{for limit \ensuremath{\beta}} \\
		C(\aaa)       & =\bigcup_{\alpha\in \ord}L'_{\alpha}
	\end{align*}
	where
	\[
		\Def_{\LL(\aaa)}(M)=\Big\{ \big\{ a\in M\mid(M,\in)\vDash_{\LL(\aaa)}\fii(a,\boldsymbol{b})\big\} \mid\fii\in\LL(\aaa),\,\boldsymbol{b}\in M^{<\omega}\Big\} .
	\]
\end{definition}

$C(\aaa)$ is a model of $\zf$, but recent results of Kennedy, Magidor
and V\"{a}\"{a}n\"{a}nen (see the appendix of \cite{IMEL2}) suggest it might not always be a model of the Axiom of Choice ($\mathrm{AC}$).
However in this paper it will always turn out to be a forcing extension
of $L$, so it will satisfy $\mathrm{AC}$.
\footnote{Due to this issue, in  \cite{IMEL2} the authors revise the definition of $C(\aaa)$, and the model defined above is denoted by $C_o(\aaa)$ ($o$ for  $o$ld). However to avoid excessive notation we will stick with the original notation here. It is still open whether there is an actual distinction between the resulting models, but recent research indicates that in the cases discussed here there is no distinction.}
\begin{definition}
	\label{def:iterated-Caa}The sequence of iterated $C(\aaa)$s is defined
	recursively by:
	\begin{align*}
		C(\aaa)^{0}        & =V                                                                               \\
		C(\aaa)^{\alpha+1} & =C(\aaa)^{C(\aaa)^{\alpha}}\,\text{ for any \ensuremath{\alpha}}                 \\
		C(\aaa)^{\alpha}   & =\bigcap_{\beta<\alpha}C(\aaa)^{\beta}\,\,\text{ for limit \ensuremath{\alpha}}.
	\end{align*}
	This will be our main object of study in this paper.
\end{definition}

\subsection{\label{subsec:Intersections-of-forcing}Intersections of forcing
	extensions}

In order to investigate the limit stages of the iterated $C(\aaa)$
construction, we need to understand intersections of generic extensions.
The basic facts are the following:
\begin{fact}
	\label{fact:descending-generic}Let $B$ be a complete Boolean algebra,
	\[
		B_{0}\supseteq B_{1}\supseteq\dots\supseteq B_{\alpha}\supseteq\dots\,\,\left(\alpha<\kappa\right)
	\]
	a descending sequence of complete subalgebras of $B$, $B_{\kappa}=\bigcap_{\alpha<\kappa}B_{\alpha}$,
	$G$ a $V$-generic filter on $B$ and for every $\alpha\leq\kappa$,
	$G_{\alpha}=G\cap B_{\alpha}$. Then
	\begin{enumerate}
		\item (Grigorieff \cite{Grigorieff75}) $\bigcap_{\alpha<\kappa}V\left[G_{\alpha}\right]$
		      satisfies $\zf$.
		\item (Jech \cite[lemma 26.6]{Jech1978}) If $B$ is $\kappa$-distributive
		      then $\bigcap_{\alpha<\kappa}V\left[G_{\alpha}\right]=V[G_{\kappa}]$,
		      and in particular satisfies $\zfc$ .
	\end{enumerate}
\end{fact}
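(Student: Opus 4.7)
The plan is to handle the two clauses separately, since (2) admits a relatively short canonical-name proof while (1) is genuinely delicate. For (2), the direction $V[G_{\kappa}]\subseteq\bigcap_{\alpha<\kappa}V[G_{\alpha}]$ is immediate: from $B_{\kappa}\subseteq B_{\alpha}$ we get $G_{\kappa}=G\cap B_{\kappa}\subseteq G\cap B_{\alpha}=G_{\alpha}$, hence $V[G_{\kappa}]\subseteq V[G_{\alpha}]$ for every $\alpha<\kappa$.

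For the non-trivial inclusion in (2), take $x\in\bigcap_{\alpha<\kappa}V[G_{\alpha}]$. A rank induction reduces matters to the case that $x$ is a set of ordinals, with $\kappa$-distributivity used to guarantee that the inductively chosen transitive-closure codings in each $V[G_{\alpha}]$ can be arranged coherently. So suppose $x\subseteq\lambda$, and for each $\xi<\lambda$ set $b_{\xi}=\|\check{\xi}\in\dot{x}\|^{B}$ where $\dot{x}$ is the canonical $B$-name for $x$. The hypothesis $x\in V[G_{\alpha}]$ yields a $B_{\alpha}$-name $\dot{x}_{\alpha}$ for $x$; since $B_{\alpha}$ is a complete subalgebra of $B$, the canonical value $b_{\xi}$ equals the supremum (computed in $B_{\alpha}$) of $\{p\in B_{\alpha}:p\Vdash\check{\xi}\in\dot{x}_{\alpha}\}$, so $b_{\xi}\in B_{\alpha}$. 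Varying $\alpha$ gives $b_{\xi}\in\bigcap_{\alpha<\kappa}B_{\alpha}=B_{\kappa}$, and then $\{(\check{\xi},b_{\xi}):\xi<\lambda\}$ is a $B_{\kappa}$-name realising $x$ under $G_{\kappa}$.

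For (1), the strategy is a classical $\mathrm{ZF}$-verification in the transitive class $M=\bigcap_{\alpha<\kappa}V[G_{\alpha}]$. Extensionality, foundation, pairing, union and infinity hold automatically because $M$ is transitive and contains all the ordinals. The non-trivial axioms are Separation, Power Set and Replacement, none of which visibly restrict from an individual $V[G_{\alpha}]$ to $M$. I would follow the standard route of proving that $M$ is closed under the G\"odel operations (inherited component-wise from each $V[G_{\alpha}]$) and is \emph{almost universal} --- every subset of $M$ is contained in a member of $M$ --- and then invoke the classical theorem that a transitive, $\mathrm{Ord}$-containing, G\"odel-closed, almost-universal class satisfies $\mathrm{ZF}$.

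The main obstacle is verifying almost universality: given a set $X\subseteq M$ in $V[G]$, we need $Y\in M$ with $X\subseteq Y$. The naive candidate $V_{\eta}\cap M$, for $\eta$ bounding the ranks of elements of $X$, need not belong to any $V[G_{\beta}]$ because cutting by $M$ depends on the whole tower. Grigorieff circumvents this by realising the intersection as a symmetric submodel of $V[G]$: the tower $\langle B_{\alpha}\rangle_{\alpha<\kappa}$ determines a group of $B$-automorphisms whose pointwise-fixed field computes exactly $M$, after which almost universality follows from the usual L\'evy reflection applied to the symmetric system. If the direct approach proves intractable I would adopt this symmetric-model viewpoint throughout.
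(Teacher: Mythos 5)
First, note that the paper does not prove this statement at all: it is quoted as a known Fact, with clause (1) attributed to Grigorieff and clause (2) to Jech (Lemma 26.6), and the surrounding text only builds on it (via Sakarovitch's characterization) later. So there is no in-paper argument for you to match; your attempt has to stand on its own, and for clause (2) it does not. The central step is wrong: there is no ``canonical $B$-name'' for a set $x\in V[G]$, and for an arbitrary $B$-name $\dot{x}$ with $\dot{x}^{G}=x$ it is simply false that $b_{\xi}=\|\check{\xi}\in\dot{x}\|$ must lie in $B_{\alpha}$ just because $x\in V[G_{\alpha}]$. For example, take $x=\varnothing$ and the name $\dot{x}=\{(\check{0},b)\}$ with $b\notin G$ and $b\notin B_{\alpha}$: this names $x$, yet $\|\check{0}\in\dot{x}\|=b\notin B_{\alpha}$. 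Boolean values depend on the name, and two names that agree along the particular generic $G$ need not have equal (or comparable) Boolean values; equality with $\|\check{\xi}\in\dot{x}_{\alpha}\|$ only holds ``modulo $G$''. Moreover, you spend the hypothesis of $\kappa$-distributivity in the wrong place: the reduction to sets of ordinals needs no distributivity (by $\in$-induction, cover $x$ by some $a\in V[G_{\kappa}]$, fix a surjection $e:\theta\to a$ in $V[G_{\kappa}]\subseteq V[G_{\alpha}]$, and pass to $e^{-1}[x]\subseteq\theta$), whereas the heart of the proof is exactly where distributivity is indispensable and your sketch never uses it.

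The standard repair: in $V[G]$ choose, for each $\alpha<\kappa$, a $B_{\alpha}$-name $\dot{x}_{\alpha}$ with $\dot{x}_{\alpha}^{G_{\alpha}}=x$; since names lie in $V$, the map $\alpha\mapsto\dot{x}_{\alpha}$ is a $\kappa$-sequence into $V$, and $\kappa$-distributivity of $B$ puts this sequence in $V$. Pick $p\in G$ forcing that all the $\dot{x}_{\alpha}$ have the same evaluation; then with $b_{\xi}^{\alpha}=\|\check{\xi}\in\dot{x}_{\alpha}\|\in B_{\alpha}$ one checks $p\wedge b_{\xi}^{\alpha}=p\wedge b_{\xi}^{\beta}$ for all $\alpha,\beta$, and $c_{\xi}=\bigvee_{\beta<\kappa}\bigwedge_{\beta\leq\alpha<\kappa}b_{\xi}^{\alpha}$ lies in $B_{\kappa}$ (each inner meet is in $B_{\beta}$ and the join can be computed from any tail) and satisfies $\xi\in x\iff c_{\xi}\in G_{\kappa}$; since $\langle c_{\xi}\mid\xi<\lambda\rangle\in V$, this gives $x\in V[G_{\kappa}]$. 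As for clause (1), your outline (G\"odel closure plus almost universality, with the hard part deferred to Grigorieff's symmetric/automorphism machinery) is essentially a citation rather than a proof, which is in effect what the paper itself does; but your specific claim that the tower yields an automorphism group of $B$ whose fixed field ``computes exactly'' the intersection is asserted without justification and is not obviously the correct formulation of Grigorieff's argument, so it cannot be counted as a proof of (1) either.
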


So one of our  challenges would be to obtain the distributivity of
the forcing notions we wish to use. To get a more precise result,
we will use a characterization by Sakarovitch \cite{sakarovitch,sakarovitch-note}
giving an exact form to $V[G_{\kappa}]$ in the $\kappa$-distributive
case.
In the following, all forcing notions are assumed to be separative.
\begin{definition}
	\label{def:normal-emb}Let $\p,\qp$ be forcing notions. A function
	$f:\p\to\qp$ is called \emph{normal} iff it is order preserving, $f''\p$
	is dense in $\qp$ and
	\[
		\forall p\in\p\forall q\in\qp\left(q\leq f(p)\to\exists p'\in\p\left(p'\leq p\land f(p')\leq q\right)\right).
	\]

	A collection $\left\langle \p_{\alpha},f_{\alpha\beta}\mid\alpha,\beta<\kappa\right\rangle $
	is called a $\kappa$\emph{-normal system} if for every $\alpha<\beta<\kappa$,
	$f_{\alpha\beta}:\p_{\alpha}\to\p_{\beta}$ is normal, and $\alpha<\beta<\gamma\to f_{\alpha\gamma}=f_{\beta\gamma}\circ f_{\alpha\beta}$.
\end{definition}

We denote by $\mathrm{ro}(\p)$ the Boolean completion of the poset $\p$, and we consider  $\p$ to be a subset of  $\mathrm{ro}(\p)$.


\begin{proposition}\label{prop:Boolean-subalg-id}
	Let $\p,\qp$ be forcing notions and  $f:\p \to \qp$ a normal function.
	Then there is a complete embedding $h: \qp \to \mathrm{ro}(\p)$ (i.e. $h$ preserves order, incompatibility and maximality of antichains).
\end{proposition}
\begin{proof}
	Define $h(q)=\sum \{ p \in \p \mid f(p) \leq q \} $, where $\sum$ denotes the least upper bound in the complete Boolean algebra $\mathrm{ro}(\p)$.
	\begin{itemize}
		\item \textbf{Order preservation:} Assume $q' \leq q$, then for every $p \in \p$, $f(p) \leq q'$ implies $f(p) \leq q$, hence $h(q') \leq h(q)$.
		\item \textbf{Incompatibility preservation:} Assume $q \bot q'$, we need to show that $h(q)\bot h(q')$.
		      In a complete Boolean algebra we have
		      \begin{align*}
			      h(q)\cdot h(q') & = \sum \{ p \in \p \mid f(p) \leq q \} \cdot \sum \{ p \in \p \mid f(p) \leq q' \} \\
			                      & = \sum \{ p \cdot p' \mid f(p) \leq q, f(p') \leq q' \}
		      \end{align*}
		      and $h(q) \bot h(q')$ iff this is $0$ iff for all appropriate  $p,p'$, $p \cdot p' = 0$.
		      But if this is not the case, then there is (by density of $\p$ in  $\mathrm{ro}(\p)$) $s \in \p$ below some such $p,p'$, but then $f(s) \leq q,q'$ by contradiction.
		\item \textbf{Maximality preservation:} Assume $A \con \qp$ is a maximal antichain and let $b \in \mathrm{ro}(\p)$.
		      Let $p\in \p$ such that  $p\leq b$ and consider $f(p)$.
		      There is $q \in A$ compatible with $f(p)$, so let $q'$ be a common extension.
		      $q' \leq f(p)$ so by normality there is $p' \leq p$ such that $f(p') \leq q'$.
		      So $p' \leq h(q') \leq h(q)$ and also $p' \leq b$ so $b$ is compatible with an element of  $h''A$. \qedhere
	\end{itemize}
\end{proof}
This means that we can identify $\qp$, and then also $\mathrm{ro}(\qp)$, with subsets of $\mathrm{ro(\p)}$, and that in general a $\kappa$-normal system of posets corresponds, after the appropriate identifications, to a descending sequence of complete Boolean algebras.

\begin{definition}
	\label{def:eqrel}Let $\left\langle \p_{\alpha},f_{\alpha\beta}\mid\alpha,\beta<\kappa\right\rangle $
	be a $\kappa$\emph{-normal system}. We define an equivalence relation
	$\sim$ on $\p_{0}$ by $p\sim q$ iff $\exists\alpha<\kappa(f_{0,\alpha}(p)=f_{0,\alpha}(q))$,
	let $\meq{\p_{0}}$ be the set of equivalence classes $\left\{ [p]\mid p\in\p_{0}\right\} $,
	and denote $f_{0,\kappa}(p)=[p]$. We give $\meq{\p_{0}}$ the natural
	order:
	\[
		[p]\leq[q]\iff\exists\alpha<\kappa\forall\beta<\kappa(\beta\geq\alpha\to f_{0,\beta}(p)\leq f_{0,\beta}(q)).
	\]
	Note that this doesn't depend on the choice of representatives so
	could be stated for some/all $p'\in[p]$ and $q'\in[q]$. Then
	we have the following fact (\cite{sakarovitch}, see also \cite{sakarovitch-note}
	and \cite[section 8.1]{Zadrozny-IteratingOrdinalDefinability1983}):
	\begin{fact}
		\label{fact:Sakarovitch}Let $\left\langle \p_{\alpha},f_{\alpha\beta}\mid\alpha,\beta<\kappa\right\rangle $
		be a $\kappa$\emph{-normal system}, for every $\alpha$ let $B_{\alpha}=\mathrm{ro}(\p_{\alpha})$
		and $B_{\kappa}=\bigcap_{\alpha<\kappa}B_{\alpha}$. Then the \textbf{separative
			quotient} of \ $\meq{\p_{0}}$ is isomorphic to a dense subset of $B_{\kappa}$.

		If $G$ is $\p_{0}$ generic over $V$, then the set $f_{0,\kappa}''G$,
		denoted by $\meq G$, is $\meq{\p_{0}}$-generic over $V$, and hence
		$V[G_{\kappa}]=V[\meq G]$.
	\end{fact}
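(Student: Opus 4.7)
The plan is to use the standard correspondence between normal maps of posets and complete subalgebra embeddings of their regular-open completions, which turns the tower $\langle\p_\alpha\rangle$ into a descending chain of complete subalgebras of $B_0$, and then to realise $\meq{\p_0}$ inside $B_0$ as the projection of $\p_0$ onto $B_\kappa$. Each normal $f_{\alpha\beta}:\p_\alpha\to\p_\beta$ canonically induces a complete embedding $B_\beta\hookrightarrow B_\alpha$, and, identifying $\p_\alpha\subseteq B_\alpha\subseteq B_0$ throughout, the element $f_{0,\alpha}(p)$ corresponds to the projection $\pi_\alpha(p)=\bigwedge\{c\in B_\alpha:c\geq p\}$. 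The coherence $f_{0,\gamma}=f_{\beta\gamma}\circ f_{0,\beta}$ together with $B_\beta\subseteq B_\alpha$ for $\alpha<\beta$ makes the sequence $\langle f_{0,\alpha}(p):\alpha<\kappa\rangle$ weakly increasing in $B_0$, since projecting onto a smaller subalgebra only weakens the condition.

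Define $\Phi:\p_0\to B_0$ by $\Phi(p)=\bigvee_{\alpha<\kappa}f_{0,\alpha}(p)$. For any fixed $\gamma<\kappa$ the tail $\{f_{0,\alpha}(p):\alpha\geq\gamma\}$ lies inside the complete subalgebra $B_\gamma$, so its join (equal to the full join by monotonicity) is in $B_\gamma$; as $\gamma$ was arbitrary, $\Phi(p)\in B_\kappa$. A short argument gives $\Phi(p)=\pi_\kappa(p)$: any $c\in B_\kappa$ with $c\geq p$ dominates each $\pi_\alpha(p)$, hence their join. The map $\Phi$ respects $\sim$ and $\leq$ by direct inspection, since both are characterised by agreement of the $f_{0,\alpha}$'s for sufficiently large $\alpha$, and applying $f_{\alpha\beta}$ propagates such agreement upwards. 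Density of $\Phi''\p_0$ in $B_\kappa$ is the central observation and uses only that $\p_0$ is dense in $B_0$: given $0<b\in B_\kappa$, pick $p\in\p_0$ with $p\leq b$ in $B_0$; then $\Phi(p)=\pi_\kappa(p)\leq b$, since $b$ itself witnesses the infimum defining $\pi_\kappa(p)$. The resulting map from the separative quotient of $\meq{\p_0}$ into the already-separative $B_\kappa$ is thus a dense embedding, giving the first assertion.

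For the generic conclusion, let $G\subseteq\p_0$ be $V$-generic. It is standard that $G_\kappa:=G\cap B_\kappa$ is $B_\kappa$-generic over $V$, so it suffices to show that $\meq G=f''_{0,\kappa}G$ and $G_\kappa$ generate the same filter in $B_\kappa$. For each $p\in G$ one has $\Phi(p)\geq p$, whence $\Phi(p)\in G\cap B_\kappa=G_\kappa$; conversely, for $b\in G_\kappa$, genericity and density of $\p_0$ in $B_0$ below $b$ produce $p\in G$ with $p\leq b$, so $\Phi(p)\leq b$. Hence $V[\meq G]=V[G_\kappa]$, as claimed. The main technical obstacle is the chain of identifications leading to $\Phi(p)=\pi_\kappa(p)\in B_\kappa$: commuting an increasing join over a tower of complete subalgebras with the projection operator and checking that the result lies in the intersection is precisely where the hypotheses bite. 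Once that identification is in hand, density and genericity both follow by essentially formal arguments.
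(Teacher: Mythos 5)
Your overall strategy---identify $f_{0,\alpha}(p)$ with the Boolean projection $\pi_{\alpha}(p)$ of $p$ onto $B_{\alpha}$ and send $[p]$ to $\Phi(p)=\sup_{\alpha<\kappa}\pi_{\alpha}(p)=\pi_{\kappa}(p)\in B_{\kappa}$---is the right one, and is exactly the reduction the paper itself makes to its Boolean-algebra proposition (there $\bar{b}=\sup_{\alpha}b_{\alpha}$ with $b_{\alpha}=\inf\{d\in B_{\alpha}\mid d\geq b\}$); your density computation and your treatment of the generic filter are fine. The gap is at the sentence ``The resulting map from the separative quotient of $\meq{\p_{0}}$ into the already-separative $B_{\kappa}$ is thus a dense embedding.'' What you actually verify---$\Phi$ is $\sim$-invariant, order-preserving, and has dense image---does not yield an embedding of the separative quotient. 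You must also show that $\Phi$ \emph{reflects} compatibility: if $\Phi(p)\cdot\Phi(q)\neq0$ then $[p]$ and $[q]$ have a common lower bound in $\meq{\p_{0}}$. (Preservation of compatibility is automatic from order-preservation; reflection is not---an order-preserving map with dense image can collapse an antichain, e.g.\ send a two-element antichain onto the unit of the trivial algebra.) Without reflection the induced map on the separative quotient need not be well defined, injective, or order-reflecting, so ``isomorphic to a dense subset of $B_{\kappa}$'' does not follow. This reflection step is precisely where the paper's sketch does its real work: from incompatibility of two classes it extracts an unbounded $I\con\kappa$ on which the projections are pairwise incompatible and then computes $\bar{b}\cdot\bar{c}=\sum_{\alpha,\beta\in I}b_{\alpha}\cdot c_{\beta}=0$ using completeness.

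A related point you should not wave through: the order on $\meq{\p_{0}}$ is defined via the \emph{poset} orders of the $\p_{\beta}$ (eventually $f_{0\beta}(p)\leq f_{0\beta}(q)$ in $\p_{\beta}$), not the Boolean order. So to prove reflection you must, from $\pi_{\gamma}(p)\cdot\pi_{\gamma}(q)\neq0$ for all large $\gamma$, produce an honest $r\in\p_{0}$ with $f_{0\gamma}(r)\leq f_{0\gamma}(p),f_{0\gamma}(q)$ eventually; this is where the normality of the $f_{\alpha\beta}$ (dense image plus the lifting clause) is genuinely used, and the same issue is needed to conclude that $\meq G$ is actually $\meq{\p_{0}}$-generic, which you assert but reduce only to the filter-equality of $\Phi''G$ and $G_{\kappa}$. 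None of this is fatal---your setup supports the missing argument, and it is essentially the computation in the paper's proposition---but as written the central claim of the first paragraph of the Fact is asserted rather than proved.
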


\end{definition}

\begin{remark}
	One should be warned that even if $\bigcap_{\alpha<\kappa}\p_{\alpha}$
	is trivial, this does not mean that $\bigcap_{\alpha<\kappa}B_{\alpha}$
	is trivial as well, and in general we cannot claim that $\bigcap_{\alpha<\kappa}B_{\alpha}=\mathrm{ro}\left(\bigcap_{\alpha<\kappa}\p_{\alpha}\right)$.
\end{remark}

We sketch a proof in the following case, from which the general case
can be derived:
\begin{proposition}
	Let $B$ be a complete Boolean algebra,
	\[
		B_{0}\supseteq B_{1}\supseteq\dots\supseteq B_{\alpha}\supseteq\dots\,\,\left(\alpha<\kappa\right)
	\]
	a descending sequence of complete subalgebras of $B$, $B_{\kappa}=\bigcap_{\alpha<\kappa}B_{\alpha}$.
	For $b,c\in B$ let $b\sim c$ iff $\exists\alpha<\kappa$ such that
	\[
		\inf\{d\in B_{\alpha}\mid d\geq b\}=\inf\{d\in B_{\alpha}\mid d\geq c\}.
	\]

	Then the  \emph{separative quotient} of $B_{0}/{\sim}$ is isomorphic
	to $B_{\kappa}$\emph{.}
\end{proposition}

\begin{samepage}
	\begin{proof}
		Recall that the separative quotient of a poset $\p$ is the unique
		(up to isomorphism) separative $\qp$ such that there is $h:\p \to \qp$
		surjective, order preserving, and satisfying that $x$ is compatible with $y$ iff $h(x)$
		is compatible with $h(y)$ (see \cite[p. 205]{Jech}). Since $B_{\kappa}$
		is separative as a Boolean algebra, we want to provide such $h:\meq{B_{0}}\to B_{\kappa}^{+}$.

		Let $b\in B_{0}^{+}$. For every $\alpha<\kappa$ let $b_{\alpha}=\inf\{d\in B_{\alpha}\mid d\geq b\}$,
		and let $\bar{b}=\sup\{b_{\alpha}\mid\alpha<\kappa\}$. Note that
		$\{b_{\alpha}\mid\alpha<\kappa\}$ is an ascending sequence, so in
		fact for every $\beta$, $\bar{b}=\sup\{b_{\alpha}\mid\beta\leq\alpha<\kappa\}$,
		and this is an element of $B_{\beta}$, so all-in-all $\bar{b}\in B_{\kappa}$.
		Now if $b'\sim b$ then for all large enough $\alpha$, $b_{\alpha}=b_{\alpha}'$
		so $\bar{b}=\bar{b'}$. So the function $h([b])=\bar{b}$ is well
		defined, and since in particular $\bar{b}\geq b>0$, it is into $B_{\kappa}^{+}$.

		We check that  $h$ is as required.
		\begin{itemize}
			\item \textbf{Surjectivity:} If $b \in B_\kappa$, then for every $\alpha$ $b_\alpha=b$, hence $\bar{b}=b$, so $h([b])=b$. Thus $h$ is surjective.
			\item \textbf{Order and compatibility preservation:}
			      If $[b],[c]\in\meq{B_{0}}$ are such that $[b] \leq [c]$, then for all large enough $\alpha$ $b_{\alpha} \leq c_{\alpha}$, so $\bar{b} \leq \bar{c}$. Hence $h$ is order preserving.
			      This also implies  that if $[b],[c]$ are compatible then also $\bar{b}, \bar{c}$ are compatible.
			\item \textbf{Incompatibility preservation:}
			      If $[b],[c]\in\meq{B_{0}}$ are incompatible, it means that there is an unbounded
			      $I\subseteq\kappa$ such that for $\alpha\in I$, $b_{\alpha}$ and
			      $c_{\alpha}$ are incompatible. But this also implies that for \emph{every}
			      $\alpha,\beta\in I$ $b_{\alpha}$ and $c_{\beta}$ are incompatible
			      (if $\alpha<\beta$ and there is e.g. $d\leq b_{\alpha},c_{\beta}$
			      then since $b_{\alpha}\leq b_{\beta}$ we get $d\leq b_{\beta},c_{\beta}$),
			      so, as we are in a complete Boolean algebra,
			      \[
				      \overline{b}\cdot\bar{c}=\sum_{\alpha\in I}b_{\alpha}\cdot\sum_{\beta\in I}c_{\beta}=\sum_{\alpha,\beta}b_{\alpha}\cdot c_{\beta}=0
			      \]
			      i.e. $\bar{b},\overline{c}$ are incompatible.
		\end{itemize}
	\end{proof}
\end{samepage}

Combining this with the previous fact (abusing notation a bit by using
$G_{\alpha}$ to denote both the generic in $\p_{\alpha}$ and in
$B_{\alpha}$) we get the following:
\begin{fact}
	\label{fact:Sak-dist}If, under the notation above, $\p_{0}$ is $\kappa$-distributive,
	then
	\[
		\bigcap_{\alpha<\kappa}V\left[G_{\alpha}\right]=V[G_{\kappa}]=V[\meq G].
	\]
\end{fact}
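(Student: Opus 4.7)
The statement decomposes cleanly into two equalities, and the plan is to match each to a previously cited fact while verifying the hypotheses align.

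First I would translate the normal system $\langle\p_{\alpha},f_{\alpha\beta}\rangle$ into a descending chain of complete Boolean subalgebras. Setting $B_{\alpha}=\mathrm{ro}(\p_{\alpha})$, the normality of each $f_{\alpha\beta}$ (order-preserving, dense image, and the lifting condition for refinements) is precisely what makes the induced map on Boolean completions a complete embedding, so after the usual identification we obtain $B_{0}\supseteq B_{1}\supseteq\cdots\supseteq B_{\alpha}\supseteq\cdots$ with $B_{\kappa}=\bigcap_{\alpha<\kappa}B_{\alpha}$. The coherence condition $f_{\alpha\gamma}=f_{\beta\gamma}\circ f_{\alpha\beta}$ guarantees that the induced projections on the Boolean side also cohere, and hence that the filters $G_{\alpha}=G\cap B_{\alpha}$ form a coherent descending family, where $G$ is the $\p_{0}$-generic identified with its $B_{0}$-generic counterpart.

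For the first equality I would invoke Fact \ref{fact:descending-generic}(2) directly on this chain: $\kappa$-distributivity of $\p_{0}$ transfers to its Boolean completion $B_{0}$, which is precisely the hypothesis of Jech's theorem, yielding $\bigcap_{\alpha<\kappa}V[G_{\alpha}]=V[G_{\kappa}]$. For the second equality I would invoke the last assertion of Fact \ref{fact:Sakarovitch}, which already states $V[G_{\kappa}]=V[\meq G]$ without any distributivity hypothesis: $\meq G=f_{0,\kappa}''G$ is $\meq{\p_{0}}$-generic over $V$, and its separative quotient sits as a dense subset of $B_{\kappa}$, so the two generics produce the same extension. Alternatively one could pass through the proposition just proved, which identifies the separative quotient of $B_{0}/{\sim}$ directly with $B_{\kappa}$.

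The one thing that genuinely needs checking is that the two notions of quotient are compatible under the identification $B_{\alpha}=\mathrm{ro}(\p_{\alpha})$: the poset quotient $\meq{\p_{0}}$ defined via the maps $f_{0,\alpha}$ should match the Boolean quotient $B_{0}/{\sim}$ defined via the projections $b\mapsto\inf\{d\in B_{\alpha}\mid d\geq b\}$. This reduces to the observation that $f_{0,\alpha}(p)$ corresponds, up to the separative quotient of $B_{\alpha}$, to the smallest element of $B_{\alpha}$ lying above $p$ in $B_{0}$, which is immediate from normality. Beyond this bookkeeping I do not expect a real obstacle, since the statement is essentially a repackaging of the two facts already cited.
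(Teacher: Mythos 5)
Your proposal is correct and follows essentially the same route as the paper, which presents this fact as an immediate combination of Fact \ref{fact:descending-generic}(2) (Jech, applied to the chain $B_{\alpha}=\mathrm{ro}(\p_{\alpha})$ with $B_{0}$ inheriting $\kappa$-distributivity from the dense $\p_{0}$) and Fact \ref{fact:Sakarovitch} together with the preceding proposition identifying the separative quotient of $\meq{B_{0}}$ with $B_{\kappa}$. Your extra remark matching $f_{0,\alpha}(p)$ with the Boolean projection $\inf\{d\in B_{\alpha}\mid d\geq p\}$ up to separativity is exactly the notational identification the paper glosses over when it lets $G_{\alpha}$ denote both the $\p_{\alpha}$- and $B_{\alpha}$-generics.
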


Lets consider now a particular case which will be of interest, where
the descending sequence corresponds to decreasing tails of a product.
\begin{lemma}
	\label{lem:kappa-closed}Let $\langle\qp_{\xi}^{\alpha}\mid\alpha\leq\xi<\kappa\rangle$
	be a collection of posets such that for every $\xi<\kappa$ there
	is a $\xi$-normal system $g_{\xi}^{\alpha\beta}:\qp_{\xi}^{\alpha}\to\qp_{\xi}^{\beta}$
	($\alpha<\beta\leq\xi)$. For $\alpha<\kappa$ let $\p_{\alpha}=\prod_{\alpha\leq\xi<\kappa}\qp_{\xi}^{\alpha}$
	(full support product). The normal systems for $\langle\qp_{\xi}^{\alpha}\mid\alpha\leq\xi<\kappa\rangle$
	induce a $\kappa$-normal system $f_{\alpha\beta}:\p_{\alpha}\to\p_{\beta}$
	where $f_{\alpha\beta}(p)(\xi)=g_{\xi}^{\alpha\beta}(p(\xi))$. If
	$\sim$ is the equivalence relation on $\p_{0}$ induced from this
	system, then $\meq{\p_{0}}$ is $\kappa$-closed.
\end{lemma}

\begin{remark}
	To simplify, one should have in mind the simple case where for every
	$\xi$, we have $\qp_{\xi}^{\alpha}=\qp_{\xi}$ for all $\alpha\leq\xi$,
	so we are just dealing with tails of the product: $\p_{\alpha}=\prod_{\alpha\leq\xi<\kappa}\qp_{\xi}$.
	And in the slightly less simple case, we have that for every $\xi$,
	$\langle\qp_{\xi}^{\alpha}\mid\alpha\leq\xi\rangle$ is a descending
	sequence of subposets such that the inclusion satisfies the normality
	condition.
\end{remark}

\begin{proof}
	Let $\left\langle p_{i}\mid i<\kappa\right\rangle $ be a sequence such
	that $i<j$ implies $[p_{i}]>[p_{j}]$. Let $\alpha_{0}=0$ and for
	every $i<\kappa$, if $\alpha_{i}$ is defined then fix $\alpha_{i+1}>\alpha_{i}$
	such that $\forall\beta\geq\alpha_{i+1}$ $f_{0\beta}(p_{i})\geq f_{0\beta}(p_{i+1})$.
	For limit $i$ set $\alpha_{i}=\sup_{j<i}\alpha_{j}$. Note that $i\leq\alpha_{i}$
	so $\sup_{i<\kappa}\alpha_{i}=\kappa$. By induction, using the fact
	that the sequence $\left\langle \alpha_{i}\mid i<\kappa\right\rangle $
	is strictly increasing, we get that for every $\beta\geq\alpha_{i+1}$
	and $m<n\leq i$ we have $f_{0\beta}(p_{m})\geq f_{0\beta}(p_{n})$.

	We now define $q\in\p_{0}$ by $q(\xi)=p_{i}(\xi)$ for the unique
	$i<\kappa$ such that $\xi\in[\alpha_{i},\alpha_{i+1})$. We claim
	that for every $i$ $[p_{i}]\geq[q]$, in particular that $\forall\beta\geq\alpha_{i+1}$,
	$f_{0\beta}(p_{i})\geq f_{0\beta}(q)$. So fix $\beta\geq\alpha_{i+1}$
	and $\xi\geq\beta$. Let $j$ be such that $\xi\in[\alpha_{j},\alpha_{j+1})$,
	so $q(\xi)=p_{j}(\xi)$. Note that $\alpha_{j+1}>\xi\geq\beta\geq\alpha_{i+1}$
	so $j>i$, so using the above claim we have $f_{0\beta}(p_{i})\geq f_{0\beta}(p_{j})$
	so:
	\[
		f_{0\beta}(p_{i})(\xi)\geq f_{0\beta}(p_{j})(\xi)=g_{\xi}^{0\beta}(p_{j}(\xi))=g_{\xi}^{0\beta}(q(\xi))=f_{0\beta}(q)(\xi)
	\]
	as required.
\end{proof}

\section{\label{sec:iterated-cs}Iterated club shooting}

\subsection{\label{subsec:Shooting-one-club}Shooting one club}

The basic tool for changing the notion of stationarity over some model
is ``club shooting'' -- adding a club to the complement of a stationary
set. For this to be possible, the complement must be ``fat'':
\begin{definition}
	\label{def:fat}Let $\kappa$ be a regular cardinal. A set $S\con\kappa$
	is called \emph{fat} iff for every club $C\con\kappa$, $S\cap C$
	contains closed sets of ordinals of arbitrarily large order-types
	below $\kappa$. We say that $T\con\kappa$ is \emph{co-fat }if $\kappa\smin T$
	is fat.
\end{definition}

\begin{lemma}[{Abraham and Shelah \cite[Lemma 1.2]{abraham-shelah-clubshooting}}]
	\label{lem:closed-subset}Assume $\mu<\kappa$, $\kappa$ regular,
	and $S\con\kappa$ has the property that for every club $C\con\kappa$,
	$S\cap C$ contains a closed set of ordinals of order-type $\mu+1$.
	Then for any $\tau<\mu^{+}$ and every club $C\con\kappa$, $S\cap C$
	contains a closed set of order-type $\tau+1$.
\end{lemma}

\begin{definition}
	\label{def:des(S)}Let $S$ be a co-fat subset of some regular $\kappa$.
	We denote by $\des S$ (the \emph{club-shooting for }\textbf{Des}\emph{troying
		the stationarity of $S$})\emph{ }the poset that adds a closed unbounded
	subset of $\kappa\smin S$ using bounded conditions. That is, conditions
	in $\des S$ are closed bounded subsets of $\kappa\smin S$, ordered
	by end-extension -- $p$ is stronger than $q$ ($p\leq q$) iff $p\cap\left(\sup q+1\right)=q$.
\end{definition}

\begin{fact}[{Abraham and Shelah\ \cite[Theorem 1]{abraham-shelah-clubshooting}}]
	\label{fact:club-shooting} Let $\kappa$ be either a strongly inaccessible
	cardinal or the successor of a regular cardinal $\mu$ such that $\mu=\mu^{<\mu}$.
	Let $S\con\kappa$ be co-fat.
	\begin{enumerate}
		\item Forcing with $\des S$ adds a club $C\con\kappa\smin S$.
		\item $\des S$ is $<\kappa$-distributive i.e. forcing with $\des S$ does
		      not add new sets of size $<\kappa$ (hence cardinals and cofinalities
		      $\leq\kappa$ remain unchanged in an extension by $\des S$).
		\item Cardinality of $\des S$ is $2^{<\kappa}$, so if $2^{<\kappa}=\kappa$
		      cardinals above $\kappa$ are not collapsed.
	\end{enumerate}
\end{fact}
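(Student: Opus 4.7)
The proof breaks into three parts; (1) and (3) are routine while (2) carries the main content.

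For (1), one shows the generic $C := \bigcup G$ is club in $\kappa$ and contained in $\kappa\smin S$. Closure is immediate from end-extension of conditions: any limit point $\gamma$ of $C$ lying below $\sup p$ for some $p\in G$ is a limit point of the closed set $p$, hence in $p\subseteq C$. Unboundedness comes from density of $D_\alpha := \{q\mid \sup q > \alpha\}$ for each $\alpha<\kappa$: given any condition $p$, co-fatness of $S$ provides a closed $E\subseteq (\sup p,\kappa)\cap(\kappa\smin S)$ with $\sup E > \alpha$, so $p\cup E \le p$ lies in $D_\alpha$. For (3), every condition is a closed bounded subset of $\kappa$, so at most $2^{<\kappa}$ of them exist, giving $|\des S| \leq 2^{<\kappa}$; under $2^{<\kappa}=\kappa$ this yields the $\kappa^+$-c.c., so cardinals above $\kappa$ are preserved.

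For (2), I aim to prove $<\kappa$-distributivity by producing, for any $\alpha<\kappa$, family of dense open sets $\{D_i\mid i<\alpha\}$, and starting condition $p$, a common extension $q\leq p$ in $\bigcap_i D_i$ via a transfinite recursion of length $\alpha$. Using co-fatness together with Lemma \ref{lem:closed-subset}, I would preselect a closed set $F\subseteq \kappa\smin S$ of suitably large order type with $\min F > \sup p$, and maintain the invariant that $\sup q_i \in F$ throughout. At a successor stage $i+1$: density of $D_i$ yields $r\le q_i$ in $D_i$; openness of $D_i$ allows setting $q_{i+1}:=r\cup\{f\}$ for any $f\in F$ with $f>\sup r$, which remains in $D_i$ and has $\sup q_{i+1}=f\in F$. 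At a limit stage $j$: closure of $F$ ensures $\beta_j := \sup_{i<j}\sup q_i \in F \subseteq \kappa\smin S$, so $q_j := \bigcup_{i<j}q_i \cup \{\beta_j\}$ is a legitimate condition extending all earlier $q_i$.

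The main obstacle is that extensions furnished by density can have arbitrarily large maxes, potentially exhausting $F$ (which has order type $<\kappa$ and so cannot be cofinal in $\kappa$) before the $\alpha$-many dense sets have been addressed. This is where the cardinal hypothesis on $\kappa$ is essential. When $\kappa=\mu^+$ with $\mu^{<\mu}=\mu$, I would carry out the construction inside an elementary submodel $M\prec H(\theta)$ of size $\mu$ containing $p,S,\{D_i\mid i<\alpha\}$ and closed under $<\mu$-sequences (available by $\mu^{<\mu}=\mu$); then $\delta := \sup(M\cap\kappa) < \kappa$ a priori caps the entire construction, and applying co-fatness inside $M$ permits choosing $F$ cofinal in $\delta$ of order type $\mu\geq\alpha$, which absorbs all growth of $\sup q_i$. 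When $\kappa$ is strongly inaccessible, the closure of $\kappa$ under cardinal exponentials supports a similar bounded recursion directly. In either case the recursion reaches stage $\alpha$ and produces the desired $q\leq p$ in $\bigcap_i D_i$.
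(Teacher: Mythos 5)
Parts (1) and (3) are fine, and the paper itself gives no proof of this statement (it is quoted from Abraham--Shelah), so the comparison is with the standard argument. Your part (2) has the right general shape -- cap a transfinite recursion by an elementary submodel and use fatness to supply the closed set of ``checkpoints'' for the suprema -- but the pivotal step is unjustified and, as stated, false. You first fix a single $M\prec H(\theta)$ of size $\mu$, set $\delta=\sup(M\cap\kappa)$, and then claim that ``applying co-fatness inside $M$'' yields a closed $F\con\kappa\smin S$ cofinal in $\delta$ of order type $\geq\alpha$. Two problems: (a) anything you obtain inside $M$ is an \emph{element} of $M$, hence a bounded subset of $\kappa$ whose supremum lies in $M\cap\kappa$ and so is strictly below $\delta$; it can never be cofinal in $\delta$. (b) More fundamentally, for an arbitrarily chosen $M$ the set $S\cap\delta$ may well be stationary in $\delta$, in which case \emph{no} closed cofinal subset of $\delta$ avoids $S$. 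Fatness of $\kappa\smin S$ only guarantees that inside every club of $\kappa$ there exist closed sets of large order type avoiding $S$; it does not let you prescribe their supremum, and in particular does not make your pre-chosen $\delta$ a point below which $S$ is non-stationary.

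The repair is to reverse the order of choice, and this is where the cardinal arithmetic genuinely enters. Build an increasing continuous chain $\langle M_\xi\mid\xi<\kappa\rangle$ of elementary submodels of $H(\theta)$ containing $p$, $S$ and the sequence of dense sets, arranged so that $M_\xi\cap\kappa$ is an ordinal and (in the case $\kappa=\mu^+$) each $M_\xi$ is closed under ${<}\mu$-sequences, which is where $\mu^{<\mu}=\mu$ is used; for inaccessible $\kappa$ one instead keeps $|M_{\xi+1}|<\kappa$ while absorbing $\power(M_\xi\cap\kappa)$, using $2^{|\xi|}<\kappa$. The set $C=\{\xi<\kappa\mid M_\xi\cap\kappa=\xi\}$ is club, and \emph{now} fatness of $\kappa\smin S$, together with Lemma \ref{lem:closed-subset} to normalize the order type, gives a closed $F\con(\kappa\smin S)\cap C\cap(\sup p,\kappa)$ of order type $\alpha+1$. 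Its top point $\gamma$ plays the role of your $\delta$: $F\cap\gamma$ is the closed cofinal set you wanted, every point of $F$ is of the form $M_\xi\cap\kappa$, and since $M_\gamma\cap\kappa=\gamma$ is transitive, the partial conditions you build (an element of $D_i\cap M$ together with one more ordinal below $\gamma$, and at limits unions of ${<}\mu$ many such) are elements of the models, so elementarity bounds the supremum of an extension meeting $D_i$ below the next point of $F$. Without this ``fatness picks the checkpoints, the checkpoints are model suprema'' coupling, the recursion in your sketch can indeed escape $F$, so as written the proof of (2) does not go through.
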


One way of obtaining co-fat stationary sets is by using $\square$-sequences.
Recall that for a regular cardinal $\kappa$, the principle $\square_{\kappa}$
(see Jensen's \cite{jensen1972fine}) asserts the existence of a sequence
$\mathcal{C}=\left\langle C_{\alpha}\mid\alpha\in\mathrm{Lim}(\kappa^{+})\right\rangle $
such that
\begin{enumerate}
	\item $C_{\alpha}$ is club in $\alpha$;
	\item If $\beta\in\mathrm{Lim}(C_{\alpha})$ then $C_{\beta}=C_{\alpha}\cap\beta$;
	\item If $\cof{\alpha}<\kappa$ then $\left|C_{\alpha}\right|<\kappa$.
\end{enumerate}
\begin{lemma}
	\label{lem:square-implis-cofat}Let $\kappa$ be regular such that
	$\square_{\kappa}$ holds, and let $S\con E_{<\kappa}^{\kappa^{+}}$
	be stationary. Then there is some stationary $\bar{S}\con S$ which
	is co-fat.
\end{lemma}
\begin{proof}
	Let $\mathcal{C}=\left\langle C_{\alpha}\mid\alpha\in\mathrm{Lim}(\kappa^{+})\right\rangle $
	be a $\square_{\kappa}$ sequence. Note first that if $\alpha<\kappa^{+}$
	is of cofinality $\kappa$, then the order-type of $C_{\alpha}$
	is $\kappa$ -- otherwise, consider the $\kappa+\omega$th element
	of $C_{\alpha}$, say $\beta$. By condition (2), $C_{\beta}=C_{\alpha}\cap\beta$
	so it is of size $\kappa$, which contradicts condition (3) since
	$\cof{\beta}=\omega$.

	Now for every $\gamma\in S$ let $f(\gamma)=\otp(C_{\gamma})$.
	Since $\gamma\in E_{<\kappa}^{\kappa^{+}}$, $f(\gamma)<\kappa$,
	so for every $\gamma\in S\smin\kappa$ $f(\gamma)<\kappa\leq\gamma$,
	hence by Fodor's lemma there is some stationary $\bar{S}\con S$ such
	that $f$ is constant on $\bar{S}$, say with value $\nu$.

	Denote $T=\kappa^{+}\smin\bar{S}$ and we want to show that $T$ is fat.
	By Lemma \ref{lem:closed-subset} it is enough to show that for any
	club $C\con\kappa^{+}$ there is a closed subset of $T\cap C$ of
	order-type $\kappa+1$.
	Let $C$ Be club in $\kappa^{+}$. $T$ contains all ordinals $<\kappa^{+}$ of cofinality $\kappa$ (since $\bar{S}\con E_{<\kappa}^{\kappa^{+}}$)
	so let $\lambda\in T\cap\mathrm{Lim}C$ of cofinality $\kappa$.
	Note that $C_{\lambda} \cap C$ is club in $\lambda$, and by our initial remark it has order-type $\kappa$.
	Let $\gamma$ be a limit point of $C_{\lambda}\cap C$ such that $\otp(C_{\lambda} \cap \gamma)>\nu$.
	Since $C_{\gamma}=C_{\lambda}\cap\gamma$, $f(\gamma) = \otp(C_{\gamma})>\nu$, hence $\gamma\notin\bar{S}$.
	So
	\[
		\{ \gamma \in \mathrm{Lim}(C_{\lambda }\cap C) \mid \otp (C_\gamma)>\nu \} \con T \cap C.
	\]
	Since $\otp(C_{\lambda} \cap C)=\kappa$, this set has order-type $\kappa$.
	It converges to $\lambda \in T \cap C$, hence we
	get a closed subset of $T\cap C$ of order-type $\kappa+1$.
\end{proof}

In order to code sets into $C(\aaa)$, we would like to destroy the
stationarity of specific sets. This in itself can be done by a single
club shooting, but to get models with $V=C(\aaa)$ we'd need to iterate
this construction and to code this coding into $C(\aaa)$, and then
repeat until we catch our tail. For this we need to be able to iterate
club shooting forcing, in a way that later stages don't destroy our
previous codings. This is where the notion of mutual stationarity,
introduced by Foreman and Magidor in \cite{foreman-magidor-mutually-stationary},
steps in.

\subsection{\label{subsec:Shooting-countably-many}Shooting countably many clubs}
\begin{definition}[Foreman and Magidor {\cite[Definition 6]{foreman-magidor-mutually-stationary}}]
	Let $K$ be a collection of regular cardinals with supremum $\delta$
	and suppose that we have $S_{\kappa}\con\kappa$ for each $\kappa\in K$.
	Then the collection $\left\{ S_{\kappa}\mid\kappa\in K\right\} $
	is \emph{mutually stationary }if and only if for all algebras $\mathfrak{A}$
	on $\delta$ (or more generally on $H(\theta)$ for some large enough
	$\theta$) there is an $N\prec\mathfrak{A}$ such that for all $\kappa\in N\cap K$,
	$\sup(N\cap\kappa)\in S_{\kappa}$. We call such $N$ a \emph{witness
		with respect to $\mathfrak{A}$ to the mutual stationarity} of the
	collection.
\end{definition}

The following theorem states that every set of stationary sets of cofinality $\omega$ is mutually stationary,
witnessed by a countable structure:
\begin{theorem}[Foreman and Magidor {\cite[Theorem 7]{foreman-magidor-mutually-stationary}}]
	\label{thm:mutual-stat}Let $\left\langle \kappa_{\alpha}\mid\alpha<\theta\right\rangle $
	be an increasing sequence of regular cardinals with supremum $\delta$,
	and $\left\langle S_{\alpha}\mid\alpha<\theta\right\rangle $ a sequence
	of stationary sets such that $S_{\alpha}\con\kappa_{\alpha}$ consists
	of points of cofinality $\omega$. Then for every algebra $\mathfrak{A}$
	on $\delta$ there is a countable $N\prec\mathfrak{A}$ such that
	for all $\alpha\in N\cap\theta$, $\sup(N\cap\kappa_{\alpha})\in S_{\alpha}$.
\end{theorem}

For our purposes we'll need to strengthen the above theorem a bit,
to incorporate also a stationary subset of $\cpow{\lambda}$ for $\lambda<\kappa_{0}$:
\begin{theorem}
	\label{thm:mutual-stat+}Let $\langle\kappa_{\alpha}\mid\alpha<\theta\rangle$
	be an increasing sequence of regular cardinals with supremum $\delta$.
	Let $\lambda$ be a regular cardinal such that $\lambda^{\omega}<\kappa_{0}$
	. Let $T\con P_{\omega_{1}}(\lambda)$ be stationary in $P_{\omega_{1}}(\lambda)$
	and for $\alpha<\theta$ let $S_{\alpha}\subseteq\kappa_{\alpha}$
	stationary of points of cofinality $\omega$. Then for every $\rho\geq\delta$
	and an algebra $\mathfrak{A}$ on $\rho$ , there is a countable $N\prec\mathfrak{A}$
	such that $N\cap\lambda\in T$ and for $\alpha<\theta$ such that
	$\kappa_{\alpha}\in N$ we have $\sup(N\cap\kappa_{\alpha})\in S_{\alpha}$.
\end{theorem}

\begin{proof}
	We extend the original proof of Theorem \ref{thm:mutual-stat} to incorporate the additional requirement.
	Fix an algebra $\mathfrak{A}$ on $\delta$.
	We wish to find a tree $\mathfrak{T}\con\delta^{<\omega}$ with a \emph{labeling} -- a function
	$l:\mathfrak{T}\to\left\{ \kappa_{\alpha}\mid\alpha<\theta\right\} \cup\left\{ \lambda\right\} $,
	such that the following hold ($\sk^{\mathfrak{A}}$ denotes the Skolem hull in $\mathfrak{A}$):
	\begin{enumerate}
		\item[(1)] If $\sigma\in\mathfrak{T}$ and $l(\sigma)=\kappa_{\alpha}$ then
		      $\left\{ \gamma\mid\sigma^{\frown}\gamma\in\mathfrak{T}\right\} \con\kappa_{\alpha}$
		      and has cardinality $\kappa_{\alpha}$.
		\item[(2)]  If $\sigma\in\mathfrak{T}$ and $\kappa_{\alpha}\in\sk^{\mathfrak{A}}(\sigma)$
		      then there are infinitely many $n\in\omega$ such that if $\tau\supset\sigma$,
		      $\tau\in\mathfrak{T}$ has length $n$, then $l(\tau)=\kappa_{\alpha}$.
		\item[(3)]  If $\sigma\in\mathfrak{T}$ and $l(\sigma)=\lambda$ then
		      there is a unique $\gamma$ such that $\sigma^{\frown}\gamma\in\T$,
		      and this $\gamma$ is $<\lambda$.
		\item[(4)] For every branch $b$ of $\mathfrak{T}$, $\sk^{\mathfrak{A}}(b)\cap\lambda\in T$.
	\end{enumerate}
	This will suffice as the proof of Theorem \ref{thm:mutual-stat} actually shows the following:
	\begin{lemma}
		If $\mathfrak{T}\con\delta^{<\omega}$ and $l:\mathfrak{T}\to\left\{ \kappa_{\alpha}\mid\alpha<\theta\right\} $
		satisfy (1)+(2) above, then there is a decreasing sequence of subtrees
		$\mathfrak{T}_{n}$, the length of the stem of $\mathfrak{T}_{n}$
		is at least $n$, and for the branch $b=\bigcap_{n<\omega}\mathfrak{T}_{n}$,
		$N=\sk^{\mathfrak{A}}(b)$ satisfies that for every $\kappa_{\alpha}\in N$,
		$N\cap\kappa_{\alpha}\in S_{\alpha}$.
	\end{lemma}
	So if $\mathfrak{T}$ also satisfies (4) then we get that $N\cap\lambda=\sk^{\mathfrak{A}}(b)\cap\lambda\in T$,
	and incorporating (3) into the proof of the lemma is straightforward.
	\begin{claim}
		There are $\T,l$ satisfying (2) as above and also:

		(1)\textup{*} If $\sigma\in\mathfrak{T}$ then $\left\{ \gamma\mid\sigma^{\frown}\gamma\in\mathfrak{T}\right\} =l(\sigma)$.
	\end{claim}

	\begin{proof}
		We build $\T,l$ by induction on the length of $\sigma$. Let $\left\langle p_{n}\mid n<\omega\right\rangle $
		be an increasing enumeration of all prime numbers. For each $\sigma\in\lambda^{<\omega}$,
		the set $\{ \alpha<\theta\mid\kappa_{\alpha}\in\sk^{\mathfrak{A}}(\sigma)\} $
		is at most countable. Enumerate it as $\left\langle \alpha_{n}^{\sigma}\mid n<\omega\right\rangle $.

		Start with $l(\left\langle \right\rangle )=\lambda$. Assume that
		$\sigma\in\T$ and $l(\sigma)$ are defined. Then we let the successors
		of $\sigma$ in $\T$ be $\left\{ \sigma^{\frown}\gamma\mid\gamma<l(\sigma)\right\} $
		to get condition (1){*}.
		If the length of $\sigma$ is of the form
		$p_{k}\cdot p_{k+n+1}^{m+1}-1$ for some $k,n,m<\omega$, then we
		let $l(\sigma^{\frown}\gamma)=\kappa_{\alpha_{n}^{\sigma\mets k}}$
		for every $\gamma$. Otherwise $l(\sigma^{\frown}\gamma)$ is arbitrary.
		In this way, if $\sigma\in\T$ and $\kappa_{\alpha}\in\sk^{\mathfrak{A}}(\sigma)$,
		let $k=\mathrm{length}(\sigma)$ and $n$ such that $\kappa_{\alpha}=\kappa_{\alpha_{n}^{\sigma}}$,
		then for every $\tau\supset\sigma$ of length of the form $p_{k}\cdot p_{k+n+1}^{m+1}$
		we have that $l(\tau)=\kappa_{\alpha_{n}^{\tau\mets k}}=\kappa_{\alpha_{n}^{\sigma}}=\kappa_{\alpha}$,
		so condition (2) holds.
	\end{proof}

	Assume $\T,l$ are as in the claim, satisfying (1){*}+(2). We want
	to define a tree $\T'$ satisfying (1)-(4). We begin by defining games
	$\G_{a}$ for $a\in\cpow{\lambda}$ as follows. We have two players,
	B and G, such that the choices of G define a branch through $\T$.
	Fix in advance an enumeration $\left\langle \gamma_{n}\mid n<\omega\right\rangle $
	of $a$. At each play, if $\sigma\in\T$ is defined by the choices
	of G, then:
	\begin{itemize}
		\item If $l(\sigma)\ne\lambda$, B chooses some $D\con l(\sigma)$ of cardinality
		      $<l(\sigma)$, and G chooses some $\gamma\in l(\sigma)\smin D$ such
		      that $\sigma^{\frown}\gamma\in\T$.
		\item If $l(\sigma)=\lambda$ then G chooses the first $\gamma_{n}\in a$
		      not chosen yet.
	\end{itemize}
	G wins if the game defines a branch $b$ through $\T$, such that
	$\sk^{\mathfrak{A}}(b)\cap\lambda=a$. Otherwise B wins. Note that
	this is an open game for B: Since by the construction there are infinitely
	many $n$s such that any node of length $n$ is labeled by $\lambda$,
	we must have $a\con\sk^{\mathfrak{A}}(b)$. So B wins if at some stage
	the game constructs a $\sigma\in\T$ such that $(\sk^{\mathfrak{A}}(\sigma)\cap\lambda)\smin a\ne\emp$,
	hence B's payoff is open. Hence the game is determined.
	\begin{claim}
		There is a club $C\con\cpow{\lambda}$ such that for every $a\in C$,
		G has a winning strategy in $\G_{a}$.
	\end{claim}

	\begin{proof}
		Otherwise, there is a stationary $S\con\cpow{\lambda}$ such that
		for every $a\in S$ B has a winning strategy $s_{a}$ in $\G_{a}$.
		So, for $\theta$ large enough, let
		\[
			N\prec\left\langle H(\theta),\in,\left\langle s_{a}\mid a\in S\right\rangle ,\mathfrak{A},\S,l,\lambda,...\right\rangle
		\]
		be countable such that $N\cap\lambda\in S$ (applying the stationarity
		of the set
		\[
			S\pup H(\theta)=\left\{ X\in\cpow{H(\theta)}\mid X\cap\lambda\in S\right\} ).
		\]
		Denote $N_{0}=N\cap\lambda$. The assumption is that in every play
		of $\G_{N_{0}}$ where B plays according to $s_{N_{0}}$, if the play
		gives the branch $b$ then $\sk^{\mathfrak{A}}(b)\cap\lambda\ne N_{0}$.
		We wish to arrive at a contradiction by describing a play where this
		fails. We  construct this play inductively such that every choice
		G makes is of an ordinal from $N$. So we assume $\sigma\in\T\cap N$
		was defined, and consider two cases:
		\begin{enumerate}
			\item $l(\sigma)\ne\lambda$. In this case B plays according to the strategy
			      $s_{N_{0}}$ a subset $s_{N_{0}}(\sigma)\con l(\sigma)$ of cardinality
			      $<l(\sigma)$. For every $a\in S$ $s_{a}(\sigma)$ is a subset of
			      $l(\sigma)$ of cardinality $<l(\sigma)$. $\left|S\right|=\lambda^{\omega}$
			      which is less than the regular $l(\sigma)$, so the cardinality of
			      $U=\bigcup\left\{ s_{a}(\sigma)\mid a\in S\right\} $ is less than
			      $l(\sigma)$. Since $\sigma,\left\langle s_{a}\mid a\in S\right\rangle \in N$,
			      also $l(\sigma),U\in N$, and so $N\vDash\left|U\right|<l(\sigma)$.
			      So there is some $\gamma\in N\cap l(\sigma)\smin U$ (in particular
			      $\gamma\notin s_{N_{0}}(\sigma)$) such that $\sigma^{\frown}\gamma\in\T$.
			      Then G chooses such $\gamma$.
			\item $l(\sigma)=\lambda$. In this case by the rules of the game G chooses
			      the first element of $N_{0}$ which wasn't chosen yet (according to
			      some enumeration of order-type $\omega$).
		\end{enumerate}
		Let $b$ be the branch constructed in the game. We wish to show that
		$\sk^{\mathfrak{A}}(b)\cap\lambda=N_{0}$. As we noted earlier, by
		the cases $l(\sigma)=\lambda$ we get $\sk^{\mathfrak{A}}(b)\cap\lambda\supseteq N_{0}$.
		On the other hand, since $N$ is in particular closed under the functions
		of $\mathfrak{A}$, and all elements of $b$ were chosen from $N$,
		we get $\sk^{\mathfrak{A}}(b)\cap\lambda\con N\cap\lambda=N_{0}$.
		So indeed $\sk^{\mathfrak{A}}(b)\cap\lambda=N_{0}$, so G wins, contradicting
		the fact that this is a game where B plays according to $s_{N_{0}}$.
	\end{proof}

	Now fix a club $C$ as in the claim,
	by stationarity of $T$ let $a\in T\cap C$, and let $s$ be
	a winning strategy for G in $\G_{a}$. We wish to define a tree $\T'\con\T$
	such that (1)-(3) hold, and every branch in $\T'$ is obtained as
	a play in $\G_{a}$ where G plays according to $s$. This will give
	us (4). Note that (2) will remain true in any subtree of $\T$ such
	that any node is contained in an infinite branch, so the main task
	is to choose nodes in a way preserving (1).
	We work by induction --
	assume that $\sigma\in\T'$ was constructed as a partial play in $\G_{a}$
	where G plays according to $s$, and we determine which of its successors
	to put in next.
	If $l(\sigma)=\lambda$ then, to get (3), there is exactly one
	option -- $\sigma^{\frown}s(\sigma)$ (note (1) doesn't talk about
	the case $l(\sigma)=\lambda$). Otherwise, we inductively define a
	sequence $\left\langle \gamma_{\nu}\mid\nu<l(\sigma)\right\rangle \con l(\sigma)$
	as follows. If $\left\langle \gamma_{\nu}\mid\nu<\mu\right\rangle $
	is already defined, let $\gamma_{\mu}$ be the response of G to B
	playing $\left\{ \gamma_{\nu}\mid\nu<\mu\right\} $. To conclude,
	we let the successors of $\sigma\in\T'$ be $\left\{ \sigma^{\frown}\gamma_{\nu}\mid\nu<l(\sigma)\right\} $.
	So condition (1) is satisfied, and by the construction any
	branch through $\T'$ is given as a play according to $s$, as required.
\end{proof}

The following is a useful remark made at the beginning of \cite[section 7]{foreman-magidor-mutually-stationary}:
\begin{remark}\label{rem:mutual-stat}
	\begin{enumerate}
		\item Any subset of a sequence of mutually stationary sets is mutually stationary.
		\item If $N\prec\left\langle H(\theta),\in,\Delta,...\right\rangle =\mathfrak{A}$
		      (where $\theta>\delta$ is regular) and $\nu\in N$, then for all
		      regular $\mu\in N\smin\left(\nu+1\right)$,
		      \[
			      \sup(N\cap\mu)=\sup(\sk^{\mathfrak{A}}(N\cup\nu)\cap\mu).
		      \]
		      In particular, if $N$ witnesses the mutual stationarity of a sequence
		      $\left\langle S_{\kappa}\mid\kappa\in K\right\rangle $ with respect
		      to $\mathfrak{A}$, $K\con N$ and $\nu\in N\cap\min K$ then also
		      $\sk^{\mathfrak{A}}(N\cup\nu)$ is such a witness.
	\end{enumerate}
\end{remark}

We now show that using mutual stationarity allows us to iterate countably
many club shooting forcings, in a way which is $\omega$-distributive
and preserves the stationarity of sets we do not wish to destroy.
\begin{theorem}
	\label{Thm:it-club-shooting-countable}Let $K=\left\langle \kappa_{\alpha}\mid\alpha<\theta\right\rangle $
	be an increasing sequence of uncountable cardinals such that for every
	regular $\lambda$ and every $\kappa\in K$, $\lambda<\kappa\Rightarrow\lambda^{\omega}<\kappa$.
	\footnote{This will hold for example if there is $\mu<\kappa_{0}$ such that
		GCH holds starting at $\mu$.}
	Let $\delta=\sup K$. For each $\alpha<\theta$ fix some stationary $T_{\alpha}\con E_{\omega}^{\kappa_{\alpha}}$.
	Assume that $\langle P_{\alpha},\dot{Q}_{\alpha}\mid\alpha<\theta\rangle$
	is a forcing iteration with \textbf{countable} support, and for each
	$\alpha$ there is a $P_{\alpha}$ name $\dot{S}_{\alpha}$ such that
	\begin{quote}
		$P_{\alpha}\Vdash$``\,$\dot{S}_{\alpha}$ is a co-fat subset of $\kappa_{\alpha}$,
		$\check{T}_{\alpha}\cap\dot{S}_{\alpha}=\emp$, and $\dot{Q}_{\alpha}=\des{\dot{S}_{\alpha}}$''.
	\end{quote}
	Let $\lambda$ be regular.
	\begin{enumerate}
		\item \label{enu:distributivity-omega}$P_{\theta}$ is $\omega$-distributive.
		\item \label{enu:ord-stationary-count}Let $T\con E_{\omega}^{\lambda}$
		      stationary (in $V$) such that if $\lambda=\kappa_{\alpha}$ for some
		      $\alpha$, then $P_{\theta}\Vdash\check{T}\cap\dot{S}_{\alpha}=\emp$.
		      Then $P_{\theta}\Vdash$``\,$\check{T}$ is stationary''.
		\item \label{enu:general-stat-count}Let $T\con\power_{\omega_{1}}(\lambda)$
		      be stationary (in $V$) such that  for every $a\in T$ and every
		      $\kappa_{\alpha}\in a$ (and if $\lambda=\kappa_{\alpha}$ then also
		      for $\lambda$), $P_{\theta}\Vdash\sup(a\cap\kappa_{\alpha})\notin\dot{S}_{\alpha}$.
		      Then $P_{\theta}\Vdash$``\,$T$ is stationary''.
		\item \label{enu:calculate-stat-count} Assume that for every $\alpha$
		      $\dot{Q}_{\alpha}=\des{\check{S}_{\alpha}}$ for $S_{\alpha}\in V$.
		      \begin{enumerate}
			      \item Let $T\con\power_{\omega_{1}}(\lambda)$, $T\in V$.
			            Then $P_{\theta}\Vdash$``\,$T$ is stationary''
			            iff $V\vDash$``\,$\tilde{T}$ is stationary'' where
			            \[
				            \tilde{T}=\left\{ a\in T\mid\forall\kappa_{\alpha}\in a\,\sup(a\cap\kappa_{\alpha})\notin S_{\alpha}\right\} .
			            \]
			      \item $C(\aaa)^{V^{P_{\theta}}}\con V$.
		      \end{enumerate}
	\end{enumerate}
\end{theorem}

\begin{proof}
	For (\ref*{enu:distributivity-omega}), let $\left\{ D_{n}\mid n<\omega\right\} \con P_{\theta}$
	be open dense, $p\in P_{\theta}$ and we wish to find $p'\leq p$
	in $\bigcap_{n<\omega}D_{n}$. Consider the set $\left\{ T_{\alpha}\mid\alpha<\theta\right\} $
	of stationary sets, each consisting of points of cofinality $\omega$.
	By Theorem \ref{thm:mutual-stat}, there is some countable
	\[
		N\prec\left\langle \delta,K,P_{\theta},p,\left\{ T_{\alpha}\mid\alpha<\theta\right\} ,D_{k},\alpha\right\rangle _{k<\omega,\alpha\in\supp(p)}
	\]
	(note that $\supp(p)$ is countable so this is possible) such that
	for every $\kappa_{\alpha}\in N$, $\sup N\cap\kappa_{\alpha}\in T_{\alpha}$.
	Enumerate all the dense open sets of $P_{\theta}$ in $N$ by $\left\langle d_{n}\mid n<\omega\right\rangle $,
	and inductively define $p_{n}\in N\cap P_{\theta}$, by setting $p_{0}=p$
	and choosing $p_{n+1}\in N$ to be some extension of $p_{n}$ in $d_{n}$.
	Let $s=\bigcup\supp(p_{n})$ (note that $s\con N$ and is countable),
	and define $p'$ to be the function on $\theta$ such that for $\alpha\in s$,
	$p'(\alpha)=\bigcup_{n<\omega}p_{n}(\alpha)\cup\left\{ \sup\bigcup_{n<\omega}p_{n}(\alpha)\right\} $
	and for $\alpha\in\theta\smin s$ $p'(\alpha)=\emp$. By a density
	argument, for every $\alpha\in s$, $\bigcup_{n<\omega}p_{n}(\alpha)$
	is cofinal in $\sup N\cap\kappa_{\alpha}$, and by assumption $\sup N\cap\kappa_{\alpha}\in T_{\alpha}$
	which is forced to be disjoint from $\dot{S}_{\alpha}$, so $p'$
	is indeed a condition in $P_{\theta}$ with support $s$ extending
	each $p_{n}$. Since each $d_{n}$ is open, $p'\in d_{n}$, and since
	every $D_{k}$ is also open dense in $N$, it equals some $d_{n}$,
	so $p'\in\bigcap_{k<\omega}D_{k}$ as required.

	(\ref*{enu:ord-stationary-count}) follows from (\ref*{enu:general-stat-count})
	by using Lemma \ref{lem:correspond}.\ref*{enu:corres-stat}, so we
	only prove (\ref*{enu:general-stat-count}). Let $p\in P_{\theta}$
	and $\dot{F}$ be a $P_{\theta}$ name forced by $p$ to be a function
	from $[\lambda]^{<\omega}$ to $\lambda$. We wish to find $p'\leq p$
	forcing that there is an element of $T$ closed under $\dot{F}$.
	Let $\mathfrak{A}=\left\langle H(\mu),f_{n}\right\rangle _{n<\omega}$
	be an algebra (for $\mu$ large enough) extending
	\[
		\left\langle H(\mu),K,P_{\theta},p,\lambda,\dot{F},\left\{ T_{\alpha}\mid\alpha<\theta\right\} ,T,\alpha\right\rangle _{\alpha\in\supp(p)}
	\]
	(note that $\supp(p)$ is countable so this is possible) such that
	the functions of $\mathfrak{A}$ are closed under composition.

	By Theorem \ref{thm:mutual-stat+} applied to $T$ and $\left\{ T_{\alpha}\mid\alpha<\theta\land\kappa_{\alpha}>\lambda\right\} $,
	there is $N\prec\mathfrak{A}$ countable such that $N\cap\lambda\in T$
	and for every $\kappa_{\alpha}\in N\smin(\lambda+1)$, $\sup N\cap\kappa_{\alpha}\in T_{\alpha}$
	(if $\lambda\geq\sup K$ then simply use the stationarity of $T$).
	Now as in the previous clause, we can define a decreasing sequence
	of conditions $\left\langle p_{n}\mid n<\omega\right\rangle \con N$,
	starting with $p_{0}=p$, that meets every dense open set of $N$.
	As before let $s=\bigcup\supp(p_{n})\con N$, and define $p'$ to
	be the function on $\theta$ such that for $\alpha\in s$, $p'(\alpha)=\bigcup_{n<\omega}p_{n}(\alpha)\cup\left\{ \sup\bigcup_{n<\omega}p_{n}(\alpha)\right\} $
	and for $\alpha\in\theta\smin s$ $p'(\alpha)=\emp$. $p'$ will be
	a condition if we show that for every $\alpha\in s$, $\Vdash\sup\bigcup_{n<\omega}p_{n}(\alpha)\notin\dot{S}_{\alpha}$.
	Again by a density argument, for every $\alpha\in s$ (in particular $\alpha\in N$), $\sup\bigcup_{n<\omega}p_{n}(\alpha)=\sup N\cap\kappa_{\alpha}$.
	If $\kappa_{\alpha}>\lambda$ then this is in $T_{\alpha}$ which
	is forced to be disjoint from $\dot{S}_{\alpha}$. If $\kappa_{\alpha}\le\lambda$,
	then since $N\cap\lambda\in T$, the assumption is exactly that $P_{\theta}\Vdash\sup(N\cap\kappa_{\alpha})\notin\dot{S}_{\alpha}$.
	Hence $p'\in P_{\theta}$.

	Now, $p_{0}$ forces that $\dot{F}$ is a function on $[\lambda]^{<\omega}$,
	so for each $x\in N\cap[\lambda]^{<\omega}$ there is in $N$ a dense
	set of conditions determining the value of $\dot{F}(\check{x})$, which is an element
	of $N\cap\lambda$, and so $p'$ forces some value for $\dot{F}(\check{x})$. So
	eventually $p'$ determines some function $\tilde{F}$ with domain
	$[\lambda]^{<\omega}\cap N$ and range in $N$ such that $p'\Vdash\dot{F}\mets N\cap\lambda=\tilde{F}$.
	So $p'$ forces that $N\cap\lambda\in T$ is closed under $\dot{F}$,
	as required.

	For (\ref*{enu:calculate-stat-count})(a), first note that by (\ref*{enu:general-stat-count})
	if $V\vDash$``$\tilde{T}$ is stationary'' then $P_{\theta}\Vdash$``$\tilde{T}$
	is stationary'' but $\tilde{T}\con T$ so also $P_{\theta}\Vdash$``$T$
	is stationary''.
	Second, assume $P_{\theta}\Vdash$``$T$ is stationary''
	but $V\vDash$``$\tilde{T}$ is not stationary''.
	Let $G\con P_{\theta}$ be generic, and work in $V[G]$.
	In $V[G]$, $T$ is stationary while $\tilde{T}$ is not, hence
	\[
		T\smin\tilde{T}=\left\{ a\in T\mid\exists\kappa_{\alpha}\in a(\sup(a\cap\kappa_{\alpha})\in S_{\alpha})\right\}
	\]
	is stationary in $\cpow{\lambda}^{V[G]}$.
	Note that in this case we must have $\lambda>\kappa_0$.
	Let $F:\lambda \times  \lambda \to \lambda$ be a function such  that for every  $\alpha$ such that $\kappa_\alpha<\lambda$ and $\beta<\lambda$,
	$F(\kappa_\alpha,\beta) = \min (G \cap \kappa_\alpha \smin \beta)$.
	Then	by stationarity there is $b \in T \smin \tilde{T}$ closed under $F$.
	So on one hand, there is $\kappa_{\alpha}\in b$ such that
	$\sup(b\cap\kappa_{\alpha})\in S_{\alpha}$.
	But on the other hand, $b$ is closed under $F(\kappa_{\alpha},\cdot)$ so $b \cap \kappa_\alpha$ contains unboundedly many elements of $G$, which implies  (as $G$ is closed) that $\sup(b\cap\kappa_{\alpha})\in G$.
	But $G$ is disjoint from $S_{\alpha}$, a contradiction.

	For (\ref*{enu:calculate-stat-count})(b), let $W$ be a generic extension
	of $V$ by $P_{\theta}$ and denote the $\alpha$th level in the construction
	of $C(\aaa)^{W}$ by $L'_{\alpha}$. We prove by induction that for
	every $\alpha$ the construction up to $\alpha$ can be done in $V$
	-- so the sequence $\left\langle L_{\alpha}'\mid\alpha\in\ord\right\rangle $
	is definable in $V$ and in particular every level is both contained
	in $V$ and an element of $V$. $L_{0}'$ is clear and the limit case
	is immediate from the induction hypothesis. So assume the assumption
	holds up to (and including) $\alpha$ and $X\in\Def_{\LL(\aaa)}(L_{\alpha}')$,
	i.e. for some $\fii\in\LL(\aaa)$, $\boldsymbol{b}\in\left.L_{\alpha}'\right.^{<\omega}$
	and $\boldsymbol{t}\in\cpow{L_{\alpha}'}^{<\omega}$
	\[
		X=\left\{ a\in L_{\alpha}'\mid W\vDash\left[\left(L_{\alpha}',\in\right)\vDash_{\LL(\aaa)}\fii\left(a,\boldsymbol{b},\boldsymbol{t}\right)\right]\right\} .
	\]
	We prove by induction on the complexity of $\fii$ that the relation
	\[
		W\vDash\left[\left(L_{\alpha}',\in\right)\vDash_{\LL(\aaa)}\fii\left(a,\boldsymbol{b},\boldsymbol{t}\right)\right]
	\]
	can in fact be determined in $V$. The only interesting case is when
	$\fii=\stat s\psi$ for some $\psi\in\LL(\aaa)$, so set $\mathcal{M}=\left(L_{\alpha}',\in\right)$
	and we need to show that
	\[
		W\vDash\left(\mathcal{M}\vDash\stat s\psi(s,a,\boldsymbol{b},\boldsymbol{t})\right)
	\]
	can by determined in $V$. Note that by the induction assumption
	on $L_{\alpha}'$ and $\omega$-dis\-tri\-buti\-vity, $a,\boldsymbol{b},\boldsymbol{t}\in V$,
	and also the countable sets $s$ considered in evaluating $\psi$
	are the same in $V$ and $W$. So we assume inductively that for any
	such $s,a,\boldsymbol{b},\boldsymbol{t}$, the relation $W\vDash\left(\mathcal{M}\vDash\psi(s,a,\boldsymbol{b},\boldsymbol{t})\right)$
	can be determined in $V$. Now for fixed $a,\boldsymbol{b},\boldsymbol{t}$,
	the relation $W\vDash\left(\mathcal{M}\vDash\stat s\psi(s,a,\boldsymbol{b},\boldsymbol{t})\right)$
	is equivalent to ``$W\vDash T$ is stationary'' where
	\[
		T=\left\{ s\in\cpow{L_{\alpha}'}\mid W\vDash\left(\mathcal{M}\vDash\psi(s,a,\boldsymbol{b},\boldsymbol{t})\right)\right\} .
	\]
	$T\in V$ since $W\vDash\left(\mathcal{M}\vDash\psi\left(a,\boldsymbol{b},\boldsymbol{t}\right)\right)$
	can be determined in $V$, so by clause (\ref*{enu:calculate-stat-count})(a),
	``$W\vDash T$ is stationary'' is equivalent to ``$V\vDash\tilde{T}$
	is stationary'' where
	\[
		\tilde{T}=\left\{ a\in T\mid\forall\kappa_{\alpha}\in a(\sup(a\cap\kappa_{\alpha})\notin S_{\alpha})\right\} \in V,
	\]
	so $W\vDash\left(\mathcal{M}\vDash\stat s\psi(s,\boldsymbol{b},\boldsymbol{t})\right)$
	can also be determined in $V$.

	Wrapping up, we can correctly determine in $V$ for every $a\in L_{\alpha}'$
	if it \emph{will} satisfy $\fii=\stat s\psi$ after the extension,
	so in fact we can calculate $X$ in $V$. So every element of $L_{\alpha+1}'$
	is in $V$, and the entire $L_{\alpha+1}'$ can be computed in $V$.
\end{proof}
This theorem will allow us to force over $L$, and even over generic extensions
of $L$, with a countable iteration of club shooting forcing. This
will be enough to get a proper extension of the ground model satisfying
$V=C(\aaa)$ (see Theorem \ref{thm:coding}), and also extensions with
a decreasing $C(\aaa)$ sequence of  length $\omega$. However, to
get longer sequences we will need longer iterations while still preserving
enough distributivity. For this it will not be enough to have a mutually
stationary sequence of sets each of which is co-fat on its own --
we will need a notion of ``mutual fatness''.

\subsection{\label{subsec:Shooting-uncountably-many}Shooting uncountably many
	clubs}
\begin{definition}
	\label{def:mutual-fatness}Let $K$ be a collection of regular cardinals
	with supremum $\delta$ and suppose that we have $S_{\kappa}\con\kappa$
	for each $\kappa\in K$. Then the collection $\left\{ S_{\kappa}\mid\kappa\in K\right\} $
	is \emph{mutually $\nu$-fat }(for $\nu<\min K$)\emph{ }if and only
	if for all algebras $\mathfrak{A}$ on $\delta$ (or more generally
	on $H(\theta)$ for some large enough $\theta$) there is a sequence
	of models $\left\langle N_{\alpha}\mid\alpha\leq\nu\right\rangle $
	satisfying that for all $\alpha\leq\nu$:
	\begin{enumerate}
		\item $N_{\alpha}\prec\mathfrak{A}$;
		\item $\left\langle N_{\beta}\mid\beta\leq\alpha\right\rangle \in N_{\alpha+1}$
		      (the sequence is \emph{internally approachable});
		\item For limit $\alpha$ $N_{\alpha}=\bigcup_{\beta<\alpha}N_{\beta}$
		      (the sequence is \emph{continuous});
		\item If $\kappa\in N_{\alpha}\cap K$ then $\sup(N_{\alpha}\cap\kappa)\in S_{\kappa}$.
	\end{enumerate}
	We say the collection is \emph{mutually ${<}\mu$-fat }if it is mutually
	$\nu$-fat for every regular $\nu<\mu$ and that it is simply \emph{mutually
		fat }if it is mutually ${<}\min K$-fat. We say that the collection
	is \emph{strongly} mutually ($\nu$-)fat if for every algebra,
	mutual ($\nu$-)fatness is witnessed by models containing $K$ as a subset (or equivalently
	by models containing $\left|K\right|$ as a subset).

\end{definition}

\begin{remark}
	Note that if $\left\{ S_{\kappa}\mid\kappa\in K\right\} $ are mutually
	$\nu$-fat witnessed with respect to some $\mathfrak{A}$ by a sequence
	$\left\langle N_{\alpha}\mid\alpha\leq\nu\right\rangle $ then for
	every regular $\mu\leq\nu$, $N_{\mu}$ witnesses with respect to
	$\mathfrak{A}$ that the sequence $\left\{ S_{\kappa}\cap E_{\mu}^{\kappa}\mid\kappa\in K\right\} $
	is mutually stationary.

\end{remark}

\begin{theorem}
	\label{Thm:it-club-shooting-full}Let $K=\left\langle \kappa_{\alpha}\mid\alpha<\theta\right\rangle $
	be an increasing sequence of uncountable regular cardinals, $\theta<\kappa_{0}$.
	Let $\delta=\sup K$. 
	Assume that $\mathcal{T}=\left\langle T_{\alpha}\mid\alpha<\theta\right\rangle $
	is a strongly mutually fat sequence ($T_{\alpha}\con\kappa_{\alpha}$).
	Assume that $\langle P_{\alpha},\dot{Q}_{\alpha}\mid\alpha<\theta\rangle$
	is a forcing iteration with \textbf{full} support,   and for each
	$\alpha$ there is a $P_{\alpha}$-name $\dot{S}_{\alpha}$ such that
	\begin{quote}
		$P_{\alpha}\Vdash$ ``$\dot{S}_{\alpha}\con E_{\omega}^{\kappa_{\alpha}}\smin \check{T}_{\alpha}$
		and $\dot{Q}_{\alpha}=\des{\dot{S}_{\alpha}}$''.
	\end{quote}

	\begin{enumerate}
		\item  \label{enu:distributivity-full}$P_{\theta}$ is ${<}\kappa_{0}$-distributive.
	\end{enumerate}
	Let $\lambda$ be regular and assume additionally that GCH holds from some cardinal below $\kappa_{0}$.
	\begin{enumerate}
		\setcounter{enumi}{1}
		\item \label{enu:ord-stationary-full} Let $T\con E_{\omega}^{\lambda}$ stationary (in $V$) such that if
		      $\lambda=\kappa_{\alpha}$ for some $\alpha$,
		      then $P_{\theta}\Vdash \check{T}\cap\dot{S}_{\alpha}=\emp$.
		      Then $P_{\theta}\Vdash$``\,$\check{T}$ is stationary''.
		\item \label{enu:general-stat-full}Let $T\con\power_{\omega_{1}}(\lambda)$
		      be stationary (in $V$) such that  for every $a\in T$ and every
		      $\kappa_{\alpha}\in a$, $P_{\theta}\Vdash\sup(a\cap\kappa_{\alpha})\notin\dot{S}_{\alpha}$,
		      and if $\lambda=\kappa_{\alpha}$ then also $P_{\theta}\Vdash\sup(a)\notin\dot{S}_{\alpha}$
		      Then $P_{\theta}\Vdash$ ``\,$\check{T}$ is stationary''.
		\item \label{enu:calculate-stat-full} Assume that there is $\left\langle S_{\alpha}\mid\alpha<\theta\right\rangle \in V$
		      such that for every $\alpha<\theta$ $\dot{Q}_{\alpha}=\des{\check{S}_{\alpha}}$.
		      \begin{enumerate}
			      \item Let $T\con\power_{\omega_{1}}(\lambda)$, $T\in V$. Then $P_{\theta}\Vdash$ ``\,$\check{T}$ is stationary''
			            iff $V\vDash$ ``\,$\tilde{T}$ is stationary'' where
			            \[
				            \tilde{T}=\left\{ a\in T\mid\forall\alpha<\theta\left[\sup a\notin S_{\alpha}\land(\kappa_{\alpha}\in a\to\sup(a\cap\kappa_{\alpha})\notin S_{\alpha})\right]\right\} .
			            \]
			      \item $C(\aaa)^{V^{P_{\theta}}}\con V$.
		      \end{enumerate}
	\end{enumerate}
\end{theorem}

\begin{proof}
	For (\ref*{enu:distributivity-full}) let $\bar{D}=\left\langle D_{\alpha}\mid\alpha<\nu\right\rangle $,
	$\nu<\kappa_{0}$, be a sequence of dense open sets in $P_{\theta}$
	and let $p\in P_{\theta}$. We can assume without loss of generality
	that $\nu$ is a regular cardinal (for singular $\nu$ we can work
	by induction and set $\bigcap_{\alpha<\nu}D_{\alpha}=\ensuremath{\bigcap_{\beta<\cof{\nu}}(\bigcap_{\alpha<\gamma_{\beta}}D_{\alpha})}$
	for some $\left\langle \gamma_{\beta}\mid\beta<\cof{\nu}\right\rangle $
	cofinal in $\nu$). For convenience we will assume the dense open
	sets are indexed only by successor ordinals. From the assumption of
	strong mutual fatness applied to $\mathfrak{A}=\left\langle H(\lambda),\in,\spo,K,P_{\theta},p,\bar{D}...\right\rangle $
	(where $\spo$ is a well-order of $H(\lambda)$) there is an internally
	approachable continuous sequence of elementary submodels of $\mathfrak{A}$,
	$\left\langle N_{\alpha}\mid\alpha\leq\nu\right\rangle $, such that
	$P_{\theta},p,\bar{D}\in N_{0}$, $\max(\theta,\nu)\con N_{0}$, and
	for all $\alpha\leq\nu$ and $\beta<\theta$, $\sup(N_{\alpha}\cap\kappa_{\beta})\in T_{\beta}$.
	We define a decreasing sequence of conditions in $P_{\theta}$ --
	$\left\langle p_{\alpha}\mid\alpha\leq\nu\right\rangle $ --
	such that for every $\alpha<\nu$\ $\left\langle p_{\beta}\mid\beta\leq\alpha\right\rangle \in N_{\alpha+1}$,
	for successor $\alpha$ $p_{\alpha}\in D_{\alpha}$ and for every
	$\beta<\theta$ $\max p_{\alpha}(\beta)=\sup(N_{\alpha}\cap\kappa_{\beta})$.\footnote{Strictly speaking we should say that for every $\beta<\theta$, $p_{\alpha}\mets\beta$
		forces that $\max p_{\alpha}(\beta)=\sup(N_{\alpha}\cap\kappa_{\beta})$,
		but for clarity we would not mention this explicitly.} Begin with $p_{0}=p\in N_{0}\in N_{1}$. If $\left\langle p_{\beta}\mid\beta\leq\alpha\right\rangle \in N_{\alpha+1}$
	is defined, choose the $\spo$-least $p_{\alpha+1}'\in D_{\alpha+1}\cap N_{\alpha+1}$
	extending $p_{\alpha}$, and define $p_{\alpha+1}$ such that for
	every $\beta<\theta$ $p_{\alpha+1}(\beta)=p_{\alpha+1}'(\beta)\cup\{\sup(N_{\alpha+1}\cap\kappa_{\beta})\}$.
	By our choice $\sup p_{\alpha+1}'(\beta)<\sup(N_{\alpha+1}\cap\kappa_{\beta})\in T_{\beta}\cap N_{\alpha+2}$
	so this is indeed a condition in $N_{\alpha+2}$, and so also $\left\langle p_{\beta}\mid\beta\leq\alpha+1\right\rangle \in N_{\alpha+2}$.

	Now assume $\alpha\leq\nu$ is limit and $\left\langle p_{\gamma}\mid\gamma<\alpha\right\rangle $
	are defined such that for every $\alpha'<\alpha$ $\left\langle p_{\gamma}\mid\gamma\leq\alpha'\right\rangle \in N_{\alpha'+1}$.
	The sequence $\left\langle N_{\gamma}\mid\gamma<\alpha\right\rangle $
	is increasing and continuous, so by the induction hypothesis
	\[
		\sup\bigcup_{\gamma<\alpha}p_{\gamma}(\beta)=\sup\left\{ \sup(N_{\gamma}\cap\kappa_{\beta})\mid\gamma<\alpha\right\} =\sup(N_{\alpha}\cap\kappa_{\beta})\in T_{\beta}
	\]
	so we can define $p_{\alpha}\in P_{\theta}$ by letting $p_{\alpha}(\beta)=\bigcup_{\gamma<\alpha}p_{\gamma}(\beta)\cup\{\sup\bigcup_{\gamma<\alpha}p_{\gamma}(\beta)\}$
	for every $\beta<\theta$.
	If $\alpha<\nu$, $\left\{ p_{\gamma}\mid\gamma<\alpha\right\} \con N_{\alpha}\in N_{\alpha+1}$,
	but furthermore, the recursive definition of the sequence can in fact
	be performed inside $N_{\alpha+1}$, and will yield the same result,
	so $\left\langle p_{\gamma}\mid\gamma<\alpha\right\rangle \in N_{\alpha+1}$,
	and hence $p_{\alpha}\in N_{\alpha+1}$ as required. For $\alpha=\nu$
	this is of-course not required.
	Now, since the sets $D_{\alpha}$
	are open, we get inductively that $p_{\alpha}\in\bigcap_{\gamma<\alpha}D_{\gamma}$
	and in particular $p_{\nu}\in\bigcap_{\gamma<\nu}D_{\gamma}$ as required.

	For (\ref{enu:ord-stationary-full}) and (\ref{enu:general-stat-full}),
	denote by $\bar{\lambda}$ the first $\alpha$ such that $\lambda\leq\kappa_\alpha$ if there is one, and $\theta$ otherwise.
	First assume that $\lambda\notin K$.
	Then
	\[
		P_{\theta}=P_{<\bar\lambda}*P_{\bar\lambda<}=\left\{ p\mets\bar\lambda\mid p\in P_{\theta}\right\} *\left\{ p\mets \theta\smin\bar\lambda\mid p\in P_{\theta}\right\}
	\]
	(if $\bar{\lambda}=\theta$ then $P_{<\bar{\lambda}}=P_{\theta}$ and $P_{\bar{\lambda}<}$ is trivial).
	$P_{\bar\lambda<}$ is $\lambda$-distributive (or trivial), so doesn't destroy stationary
	sets at $\lambda$ or at $\cpow{\lambda}$.
	For $P_{<\bar\lambda}$ the argument is essentially the same as in Theorem \ref{Thm:it-club-shooting-countable}(\ref*{enu:general-stat-count})
	(and even simpler):
	For any $p\in P_{<\bar\lambda}$ and $\dot{F}$ a
	$P_{<\bar\lambda}$ name forced by $p$ to be a function from $[\lambda]^{<\omega}$
	to $\lambda$, let $\mathfrak{A}=\left\langle H(\mu),f_{n}\right\rangle _{n<\omega}$
	be an algebra (for $\mu$ large enough) extending
	\[
		\left\langle H(\mu),K,P_{\theta},p,\lambda,\dot{F},\left\{ T_{\alpha}\mid\alpha<\lambda\right\} ,T\right\rangle
	\]
	such that the functions of $\mathfrak{A}$ are closed under composition.
	By stationarity of $T$ there is some countable $N\prec\mathfrak{A}$
	such that $N\cap\lambda\in T$. As before we define a decreasing sequence
	of $P_{<\bar{\lambda}}$ conditions $\left\langle p_{n}\mid n<\omega\right\rangle \con N$,
	starting with $p_{0}=p$, that meets every dense open set of $N$,
	and let $p'=\bigcup_{n<\omega}p_{n}$.
	Since we assumed that for every $a\in T$ and every $\kappa_{\alpha}\in a$, $P_{\theta}\Vdash\sup(a\cap\kappa_{\alpha})\notin\dot{S}_{\alpha}$,
	$p'$ is a condition of $P_{<\bar{\lambda}}$.
	As before, $p'$ forces that $N\cap\lambda\in T$ is closed under $\dot{F}$, as required.

	Now assume $\lambda\in K$, so that $\lambda=\kappa_{\bar\lambda}$.
	Then $P_{\theta}=P_{<\bar\lambda}*P_{\bar\lambda}*P_{\bar\lambda<}$
	where as before $P_{<\bar\lambda}$ and $P_{\bar\lambda<}$ don't destroy
	stationarity at $\lambda$, and for $P_{\bar\lambda}$ we can apply theorem
	\ref{Thm:it-club-shooting-countable} to get that stationarity of
	$T$ is preserved.

	The proof of (\ref*{enu:calculate-stat-full}) is the same as in (\ref*{enu:calculate-stat-count})
	of Theorem \ref{Thm:it-club-shooting-countable}.
\end{proof}
We will now present two methods of obtaining strongly mutually fat
sets -- by using $\square$-sequences, and by forcing a sequence
of non-reflecting stationary sets.

\subsubsection{\label{subsec:square-sequence}$\square$-sequence on singulars}

Recall the so-called ``Global $\square$ principle'' (see Jensen's
\cite{jensen1972fine}) that asserts the existence of a sequence $\mathcal{C}=\left\langle C_{\alpha}\mid\alpha\text{ a singular ordinal}\right\rangle $
such that
\begin{enumerate}
	\item $C_{\alpha}$ is club in $\alpha$;
	\item $\otp(C_{\alpha})<\alpha$;
	\item If $\beta\in\mathrm{Lim}(C_{\alpha})$ then $C_{\beta}=C_{\alpha}\cap\beta$.
\end{enumerate}
For an ordinal $\theta$ we say that $\square_{(\theta)}$ holds if there is a sequence
\[
	\mathcal{C}=\left\langle C_{\alpha}\mid\alpha<\theta\text{ a singular limit ordinal}\right\rangle
\]
satisfying (1-3) above (this corresponds to the principle $\square_{(0,\theta)}$
in \cite[definition 1.5]{DZAMONJA-shelah}). We call such a sequence
a \emph{$\square_{(\theta)}$-sequence.}

Denote the class of singular limit ordinals by $\sing$ and for any
$\theta$ denote $\theta\cap\sing$ by $\sing[\theta]$. Note that
$\sing[\theta]$ is stationary in $\theta$.
\begin{theorem}
	\label{thm:mut-fat-squar}Let $K=\left\{ \kappa_{\xi}\mid\xi<\theta\right\} $
	be a set of regular uncountable cardinals such that $\theta^{+}<\kappa_{0}$,
	$\kappa_{0}$ not the successor of a singular, and that the GCH holds
	starting from some cardinal $\psi$ such that $\psi^{++}<\kappa_{0}$.
	Let $\kappa^{*}=\sup K$ and assume $\mathcal{C}=\left\langle C_{\alpha}\mid\alpha\in\sing[\kappa^{*}+1]\right\rangle $
	is a \emph{$\square_{(\kappa^{*}+1)}$}-sequence and $\left\langle S_{\kappa}\mid\kappa\in K\right\rangle $
	is a sequence of sets such that $S_{\kappa}\con E_{\omega}^{\kappa}$
	is stationary in $\kappa$. Then there is a sequence $\left\langle \bar{S}_{\kappa}\mid\kappa\in K\right\rangle $
	such that for every $\kappa\in K$ $\bar{S}_{\kappa}\con S_{\kappa}$
	is stationary in $\kappa$ and the sequence $\left\langle T_{\kappa}=\kappa\smin\bar{S}_{\kappa}\mid\kappa\in K\right\rangle $
	is strongly mutually fat.
\end{theorem}

\begin{proof}
	We begin as in Lemma \ref{lem:square-implis-cofat}. Considering the
	function $\gamma\mapsto\otp(C_{\gamma})$, we apply Fodor's lemma
	at every $S_{\kappa}$ for $\kappa\in K$ to obtain $\left\langle \bar{S}_{\kappa}\mid\kappa\in K\right\rangle $
	a sequence of stationary sets, $\bar{S}_{\kappa}\con S_{\kappa}$
	and $\vec{\nu}=\left\langle \nu_{\kappa}\mid\kappa\in K\right\rangle \in\prod_{\kappa\in K}\kappa$
	such that for every $\gamma\in\bar{S}_{\kappa}$ $\otp(C_{\gamma})=\nu_{\kappa}$.
	Set $T_{\kappa}=\kappa\smin\bar{S}_{\kappa}$.  Note that if $\alpha$
	is a limit point of some $C_{\gamma}$, then $C_{\alpha}=\alpha\cap C_{\gamma}$
	implies that $\otp(C_{\alpha})$ equals $\alpha$'s place in $C_{\gamma}$,
	so if $\alpha$'s place in $C_{\gamma}$ is not $\nu_{\kappa}$, then
	it is in $T_{\kappa}$.

	Let $\mathfrak{A}$ be an algebra on some large enough $H(\lambda)$.
	We can assume it has constants for $\kappa^{*}$, $K$, $\mathcal{C}$
	and $\vec{\nu}$, and fix a cardinal $\mu<\kappa_{0}$ which we can
	assume is regular (as $\kappa_{0}$ is not the successor of a singular).
	We need to find an internally approachable continuous sequence of models
	$N_{\alpha}\prec\mathfrak{A}$, $\alpha\leq\mu$ such that
	$K\con N_{0}$ and for all $\alpha\leq\mu$ and $\kappa\in K$, $\sup(N_{\alpha}\cap\kappa)\in T_{\kappa}$.

	Let $\bar{\mu}=\max\{\theta^{+},\psi^{++},\mu\}$ (note $\bar{\mu}<\kappa_{0}$)
	and let $M\prec\mathfrak{A}$ of size $\bar{\mu}$ such that $\bar{\mu}+1,K\con M$
	and $\psu{<\bar{\mu}}M\con M$ (the last is possible by the GCH assumption).
	For every $\kappa\in K$ let $\gamma_{\kappa}=\sup M\cap\kappa$.
	$\cof{\gamma_\kappa}=\bar{\mu}<\gamma_\kappa$,
	as $\psu{<\bar{\mu}}M \con M$ implies that $\left| M \cap \kappa \right| = \bar{\mu}$.
	In particular $\gamma_{\kappa}$ is singular, and we denote $C_{\gamma_{\kappa}}^{M}:=C_{\gamma_{\kappa}}\cap M$
	(note the slight abuse of notation --
	we think of $C_{\gamma_{\kappa}}^{M}$ as ``the $C_{\gamma_{\kappa}}$ of $M$'' even though $\gamma_{\kappa}$ and $C_{\gamma_{\kappa}}$ are not in $M$).
	This is unbounded and ${<}\bar{\mu}$-closed in $\gamma_{\kappa}$.

	We now inductively define a matrix $\left\langle M_{\alpha,\kappa}\mid\left\langle \alpha,\kappa\right\rangle \in\mu\times(\{0\}\cup K)\right\rangle \in M$
	of elementary submodels of $M$. The order of the induction is the
	left-lexicographic order on $\mu\times(\{0\}\cup K)$ -- for $\alpha\in\mu$
	we first define $\left\langle M_{\alpha,\kappa}\mid\kappa\in\{0\}\cup K\right\rangle $
	before moving to $\alpha+1$. $M_{0,0}$ is the closure of $K$ in
	$M$ (so of size $\left|K\right|<\theta^{+}\leq\bar{\mu}$). Assume
	that for some $\left\langle \alpha,\kappa\right\rangle \in\mu\times K$,
	$\left\langle M_{\alpha,\kappa'}\mid\kappa'\in\{0\}\cup(K\cap\kappa)\right\rangle $
	is defined, is an element of $M$, and is such that each $M_{\alpha,\kappa'}$
	is of size $<\bar{\mu}$ (in $M$). So $\left|\bigcup_{\kappa'}M_{\alpha,\kappa'}\right|<\bar{\mu}$,
	and in particular $\left(\bigcup_{\kappa'}M_{\alpha,\kappa'}\right)\cap\kappa=\left(\bigcup_{\kappa'}M_{\alpha,\kappa'}\right)\cap\gamma_{\kappa}$
	is bounded in $\gamma_{\kappa}$ (whose cofinality is $\bar{\mu}$),
	say by $\delta$. $C_{\gamma_{\kappa}}^{M}$ is unbounded in $\gamma_{\kappa}$,
	so there is some $\xi_{\alpha,\kappa}\in C_{\gamma_{\kappa}}^{M}\smin\delta$.
	If $\otp(C_{\gamma_{\kappa}})\leq\nu_{\kappa}$ then we pick the least such $\xi_{\alpha,\kappa}$.
	Otherwise, since $C_{\gamma_{\kappa}}^{M}$ must also be of cofinality $\bar{\mu}>\omega=\cof{\nu_{\kappa}}$ ($\nu_{\kappa}$ is the order-type of a club in some element of $S_{\kappa}\con E_{\omega}^{\kappa}$),
	there must be cofinally many elements of $C_{\gamma_{\kappa}}^{M}$
	above the $\nu_{\kappa}$ place of $C_{\gamma_{\kappa}}$, so we can
	choose the least of those. Then we set $M_{\alpha,\kappa}$ as the
	closure of
	\[
		\bigcup_{\kappa'}M_{\alpha,\kappa'}\cup\left\{ \sup\Big(\bigcup_{\kappa'}M_{\alpha,\kappa'}\Big)\cap\kappa,\,\xi_{\alpha,\kappa},\left\langle M_{\alpha,\kappa'}\mid\kappa'\in\{0\}\cup(K\cap\kappa)\right\rangle \right\}
	\]
	in $M$. Still, we remain of size $<\bar{\mu}$.

	If $M_{\alpha,\kappa}$ is defined for all $\kappa\in\{0\}\cup K$
	then $M_{\alpha+1,0}=\bigcup_{\kappa\in K}M_{\alpha,\kappa}$, and
	if $M_{\beta,0}$ is defined for all $\beta<\alpha$ for limit $\alpha$,
	then $M_{\alpha,0}=\bigcup_{\beta<\alpha}M_{\beta,0}$. We also define
	$M_{\mu,0}=\bigcup_{\beta<\mu}M_{\beta,0}$.

	We claim that $\left\langle M_{\alpha,0}\mid\alpha\in\mathrm{Lim}(\mu+1)\right\rangle $
	is as required. It is internally approachable as for limit $\alpha$,
	$\left\langle M_{\gamma,0}\mid\gamma\in\mathrm{Lim}(\alpha+1)\right\rangle \in M_{\alpha,1}\con M_{\alpha+\omega,0}$.
	It is continuous by definition. Let $\kappa\in K$, we claim that
	$\{\xi_{\beta,\kappa}\mid\beta<\alpha\}$ is unbounded in $M_{\alpha,0}\cap\kappa$.
	If $\zeta\in M_{\alpha,0}\cap\kappa$ then there is $\beta<\alpha$
	such that $\zeta\in M_{\beta,\kappa}\cap\kappa$. Then in fact $\zeta\in M_{\beta,\kappa}\cap\gamma_{\kappa}$,
	and by our choice we have $\xi_{\beta+1,\kappa}>\zeta$ as required.
	So $\sup M_{\alpha,0}\cap\kappa=\sup\{\xi_{\beta,\kappa}\mid\beta<\alpha\}$
	and since $\{\xi_{\beta,\kappa}\mid\beta<\alpha\}\con C_{\gamma_{\kappa}}$
	and $\alpha<\mu\leq\bar{\mu}=\cof{C_{\gamma_{\kappa}}}$, this is
	also an element of $C_{\gamma_{\kappa}}$, and by our choices its
	place in $C_{\gamma_{\kappa}}$ is not $\nu_{\kappa}$, so as we noted
	before, it is an element of $T_{\kappa}$.
\end{proof}

\subsubsection{\label{subsec:Forcing-non-reflecting-stationar}Forcing non-reflecting
	stationary sets}

Recall that a stationary set $S\con\kappa$ is said to \emph{reflect
	at $\lambda<\kappa$} if $S\cap\lambda$ is stationary at $\lambda$.
We say that $S$ is \emph{non-reflecting }if it does not reflect at
any $\lambda<\kappa$.
\begin{definition}
	Let $\kappa$ be a regular uncountable cardinal. The poset $\nr{\kappa}$
	adding a Non-Reflecting stationary subset of $\kappa$ consists of
	conditions which are functions $p:\alpha+1\to\{0,1\}$ where $\alpha<\kappa$
	such that $p(\beta)=0\Rightarrow\cof{\beta}=\omega$ and for every
	limit $\mu<\kappa$ of uncountable cofinality, $p^{-1}\{0\}\cap\mu$
	is \emph{not} stationary at $\mu$. A condition $p$ is stronger than
	$q$ if $q\con p$. We denote $p^{-1}\{i\}$ by $S_{p,i}$.
\end{definition}

\begin{proposition}
	\label{prop:NR-props} Let $\kappa$ be a regular uncountable cardinal.
	Then:
	\begin{enumerate}
		\item $\nr{\kappa}$ is $\sigma$-closed.
		\item $\nr{\kappa}$ is ${<}\kappa$ strategically-closed.
		\item If $G\con\nr{\kappa}$ is generic then in $V[G]$ the set
		      \[
			      S_{G}=\bigcup_{p\in G}S_{p,0}=\left\{ \gamma<\kappa\mid\exists p\in G\,p(\gamma)=0\right\}
		      \]
		      is a non-reflecting stationary subset of $\kappa$.
	\end{enumerate}
\end{proposition}

\begin{proof}
	(1) If $\left\langle p_{n}\mid n<\omega\right\rangle $ is a decreasing sequence of conditions then
	\[
		\bigcup_{n<\omega} p_{n}\cup\left\{ \left\langle \sup_{n<\omega} \dom(p_{n}),1\right\rangle \right\}
	\]
	is also a condition since the non-reflection condition regards only
	limits of uncountable cofinality.

	(2) Let $\nu<\kappa$ and fix some club $C\con\kappa$. We define
	a strategy for player II in the game of length $\nu$, where the players take turns in choosing a decreasing sequence of conditions, and limit stages are chosen by player II.
	We proceed  inductively, making sure that at every limit stage $\alpha$, if the game produced
	$\left\langle p_{\gamma}\mid\gamma<\alpha\right\rangle $ then $C\cap\bigcup\{S_{p_{\gamma},1}\mid\gamma<\alpha\}$
	is club in $\sup\{\dom(p_{\gamma})\mid\gamma<\alpha\}$. At successor
	stages, if player I chooses $p$, let $\alpha=\min C\smin\dom(p)$
	and player II chooses $p'=p\cup\{\left\langle \gamma,1\right\rangle \mid\gamma\in\alpha+1\smin\dom(p)\}$.
	In particular $\alpha=\max\dom(p')\in C\cap S_{p',1}$. This is indeed
	a condition since $S_{p',0}=S_{p,0}$ so the non-reflection requirement
	holds. At limit stages, if $\left\langle p_{\gamma}\mid\gamma<\alpha\right\rangle $
	is defined by the strategy, let $\delta=\sup\{\dom(p_{\gamma})\mid\gamma<\alpha\}$,
	then the successor stages ensure that $C\cap\bigcup\{S_{p_{\gamma},1}\mid\gamma<\alpha\}$
	is cofinal in $\delta$ and the induction assumption ensures it is
	closed. So player II chooses $p_{\alpha}=\bigcup_{\gamma<\alpha}p_{\gamma}\cup\left\{ \left\langle \delta,1\right\rangle \right\} $,
	which ensures that the inductive assumptions remain true.
	To see that this is a condition, if $\mu<\delta$ then $S_{p_{\alpha},0} \cap \mu =S_{p_{\gamma},0} \cap \mu$
	for some $\gamma<\alpha$ so $S_{p_{\alpha},0}\cap\mu$ is not stationary
	by assumption. If $\mu>\delta$ then $S_{p_{\alpha},0}$ is bounded
	below $\mu$ so surely not stationary. For $\delta$, the assumption
	was that $C\cap\bigcup\{S_{p_{\gamma},1}\mid\gamma<\alpha\}$ is club
	in $\delta$, and it is surely disjoint from $S_{p_{\alpha},0}$.

	(3) For every $\alpha<\kappa$ $S_{G}\cap\alpha=S_{p,0}\cap\alpha$
	for some $p$, so is not stationary at $\alpha$. Hence we only need
	to show $S_{G}$ is stationary in $\kappa$.
	Let $\dot{C}$ be a name for a club in $\kappa$.
	We need to show that
	\[
		D=\left\{ p\in\nr{\kappa}\mid\exists\gamma\in\dom(p)\,(p(\gamma)=0\land p\Vdash\check{\gamma}\in\dot{C})\right\}
	\]
	is dense.
	Let $p\in\nr{\kappa}$. 
	Define a decreasing sequence of conditions $p_n$ and ordinals $\alpha_n$ such that  $p_0=p$,
	$p_{n+1} \Vdash \alpha_n \in \dot{C} \smin \dom(p_n)$
	and $\alpha_n \in \dom(p_{n+1})$.
	Then $\alpha^*:=\bigcup_n \dom(p_{n}) = \sup_{n<\omega} \alpha_n$ and like before,
	\[
		\bigcup_n p_{n}\cup\left\{ \left\langle \alpha^*,0\right\rangle \right\}
	\]
	is a condition, which forces that $\alpha^* \in \dot{C}_G \cap S_G$, i.e. is in $D$.
	So $S_{G}$ is indeed a non-reflecting stationary set in $V[G]$.
\end{proof}
\begin{definition}
	Let $K=\left\langle \kappa_{\zeta}\mid\zeta<\theta\right\rangle $
	be an increasing sequence of uncountable cardinals which are successors
	of regulars, $\theta<\kappa_{0}$. Let $\nr K$ be the full support
	product $\Pi_{\zeta<\theta}\nr{\kappa_{\zeta}}$.

	Note that by strategic closure of each component this is the same
	as the full support iteration, and is also ${<}\kappa_{0}$-strategically
	closed.
\end{definition}

\begin{theorem}
	Let $G\con\nr K$ be generic, for every $\zeta$ $G_{\zeta}$ the
	projection of $G$ on the $\zeta$th coordinate, $S_{\zeta}=S_{G_{\zeta}}$
	the induced stationary set and $T_{\zeta}:=\kappa_{\zeta}\smin S_{\zeta}=\bigcup_{p\in G}S_{p(\zeta),1}$.
	Then $\left\langle T_{\zeta}\mid\zeta<\theta\right\rangle $ are strongly
	mutually fat.
\end{theorem}

\begin{proof}
	Fix some $\nu<\kappa_{0}$ and let  $\mathfrak{A}=\left\langle \lambda,\in,\theta,K,\nr K^{V},...\right\rangle $
	be, in $V[G]$, an algebra  for some large enough $\lambda$
	(where $K,\nr K^{V}$ are coded in $V$ as ordinals)\footnote{We take the algebra to be on an ordinal so there are no ``new''
		elements of $\mathfrak{A}$. This doesn't limit generality.}. We will assume without loss of generality $\theta\leq\nu$.  We
	aim to show that the set of conditions forcing that there is a witness
	with respect to $\mathfrak{A}$ to the mutual fatness of $\left\langle T_{\zeta}\mid\zeta<\theta\right\rangle $,
	is dense.

	Given some $p_{0}\in\nr K$, we inductively define a decreasing sequence
	of conditions $\left\langle p_{\alpha}\mid\alpha\leq\nu\right\rangle $
	and an internally approachable continuous sequence of sets $\left\langle N_{\alpha}\mid0<\alpha\leq\nu\right\rangle \in V$
	such that $p_{\alpha}\in N_{\alpha+1}$ and for $\alpha>0$ $p_{\alpha}\Vdash\check{N}_{\alpha}\prec\mathfrak{A}$,
	as follows. There is some name $\dot{N}$ such that $p_{0}$ forces
	that $p_{0}\in\dot{N}\prec\mathfrak{A}$, $\theta+1\con\dot{N}$ and
	$|\dot{N}|<\kappa_{0}$ . By strategic closure there is some $p_{1}\leq p_{0}$
	and some $N_{1}\in V$ such that $p_{1}\Vdash\dot{N}=\check{N}_{1}$.
	If $p_{\alpha},N_{\alpha}$ are defined, we apply the same procedure
	to get $p_{\alpha+1}$ and $N_{\alpha+1}$, with the additional requirement
	that $N_{\alpha}\cup\left\{ \left\langle N_{\gamma}\mid0<\gamma\leq\alpha\right\rangle \right\} \con N_{\alpha+1}$.
	For $\alpha$ limit, set $N_{\alpha}=\bigcup_{\beta<\alpha}N_{\beta}$,
	and $p_{\alpha}$ is defined for every $\zeta<\theta$ by $p_{\alpha}(\zeta)=\bigcup_{\beta<\alpha}p_{\beta}(\zeta)\cup\left\{ \left\langle \bigcup_{\beta<\alpha}\dom\left(p_{\beta}(\zeta)\right),1\right\rangle \right\} $.
	Note that in the limit case, since for every $\beta<\alpha$ $p_{\beta}\in N_{\beta+1}\con N_{\alpha}$,
	for every $\zeta<\theta$ $\dom(p_{\beta}(\zeta))<\sup(N_{\alpha}\cap\kappa_{\zeta})$,
	so $\bigcup_{\beta<\alpha}\dom\left(p_{\beta}(\zeta)\right)\leq\sup(N_{\alpha}\cap\kappa_{\zeta})$,
	and we can actually make sure they are equal. So $\sup(N_{\alpha}\cap\kappa_{\zeta})\in S_{p_{\alpha}(\zeta),1}$.
	Also note that $p_{\alpha}$ forces that for every $\beta<\alpha$
	$N_{\beta}\prec\mathfrak{A}$ so it also forces that $N_{\alpha}\prec\mathfrak{A}$.

	To conclude, $p_{\nu}$ forces that the sequence $\left\langle N_{\alpha}\mid\alpha\leq\nu,\alpha\,\mathrm{limit}\right\rangle $
	will be an internally approachable sequence of elementary submodels
	of $\mathfrak{A}$ such that for every limit $\alpha\leq\nu$ and
	every $\zeta<\theta$, $\sup(N_{\alpha}\cap\kappa_{\zeta})\in S_{p_{\nu}(\zeta),1}\con T_{\zeta}$.
	In other words, $p_{\nu}\leq p_{0}$ forces that $\left\langle N_{\alpha}\mid\alpha\leq\nu,\alpha\,\mathrm{limit}\right\rangle $
	witnesses strong mutual fatness of $\langle \dot{T}_{\zeta}\mid\zeta<\theta\rangle $
	with respect to $\mathfrak{A}$, as required.
\end{proof}

\section{\label{sec:coding}Coding by shooting clubs}
Let $S \con E^\kappa_\omega$ for some regular $\kappa>\omega$,
and let $M$ be a transitive model (satisfying enough of $\zf$) such that  $\kappa,S \in M$.
As we have seen in Lemma \ref{lem:correspond}, $S$ is stationary iff the set $\{ s \in \cpow{\kappa} \mid \sup s \in S \} $ is stationary.
By Lemma \ref{lem:projections} this is stationary iff $\{ s \in \cpow{M} \mid  \sup s\cap \kappa \in S\} $ is stationary,
and this is iff $(M,\in) \vDash \stat s (\sup s \cap \kappa \in S)$.
Hence using $\LL(\aaa)$ we can identify whether a certain set of ordinals of cofinality  $\omega$ is stationary or not.
Now, if $\vec{S}=\left\langle S^{\alpha}\mid\alpha<\kappa\right\rangle \in C(\aaa)$ are all subsets of $E^\kappa_\omega$ for some $\kappa$,
then $\{ \alpha<\kappa \mid S^\alpha \text{ is stationary} \}$ is in $C(\aaa)$ as well.
So we think of such sequences $\vec{S}$ as giving us a way to ``code'' sets of ordinals into $C(\aaa)$ via their stationarity.
We will use this observation, together with the tools to destroy the stationarity of certain sets, to code sets into $C(\aaa)$ in generic extensions.

\begin{definition}
	Let $X$ be a set of ordinals and $\kappa>\sup X$ a successor of
	a regular cardinal. If $\vec{S}=\left\langle S^{\alpha}\mid\alpha<\kappa\right\rangle $
	is a partition of a co-fat set $\tilde{S}\con E_{\omega}^{\kappa}$
	into disjoint stationary sets, then we denote the poset $\des{\bigcup\left\{ S^{\alpha}\mid\alpha\in X\right\} }$
	by $\Der(X,\kappa,\vec{S})$. $\dot{\der}(X,\kappa,\vec{S})$ denotes
	the name for its generic, and $\der(X,\kappa,\vec{S})$ will denote
	an arbitrary generic. If $\kappa$ and $\vec{S}$ are fixed and clear
	from the context we will often write simply $\Der X$, $\dot{\der}X$ and $\der X$.

	In a generic extension by shooting a club through the complement of
	the stationary set  $\bigcup\left\{ S^{\alpha}\mid\alpha\in X\right\} $
	we get that
	\begin{align*}
		X & =\left\{ \alpha<\sup X\mid S^{\alpha}\cap\der X=\emp\right\}           \\
		  & =\left\{ \alpha<\sup X\mid S^{\alpha}\text{ is not stationary}\right\}
	\end{align*}
	hence, as we observed above, if $\vec{S}$ is in the $C(\aaa)$ of the extension (e.g. if
	$\vec{S}\in L$), then so will $X$ be. Thus we refer to $\Der(X,\kappa,\vec{S})$
	as ``coding $X$ into $C(\aaa)$ at $\kappa$, using $\vec{S}$\,''.  We make this formal in the
	following way:
\end{definition}

\begin{proposition}
	\label{cor:C(aa) of coding}Let $X\con\ord$, $\kappa>\sup X$ a successor
	of a regular cardinal, and assume there are $S\con E_{\omega}^{\kappa}$
	a co-fat stationary set, and $\vec{S}=\left\langle S^{\alpha}\mid\alpha<\kappa\right\rangle $
	a partition of $S$ into disjoint stationary sets such that $\vec{S}\in C(\aaa)^{V[\der(X,\kappa,\vec{S})]}$.
	Then $X\in C(\aaa)^{V[\der(X,\kappa,\vec{S})]}$.

	In particular, if $V=L[X]$ and $\vec{S}\in L$ then $C(\aaa)^{V[\der(X,\kappa,\vec{S})]}=L[X]$.
\end{proposition}

\begin{proof}
	As we noted $X=\left\{ \alpha<\sup X\mid S^{\alpha}\text{ is not stationary}\right\} \in C(\aaa)^{V[\der(X,\kappa,\vec{S})]}$.
	So $L[X]\con C(\aaa)^{V[\der(X,\kappa,\vec{S})]}$.
	If $\vec{S}\in L$ then by Theorem \ref{Thm:it-club-shooting-countable}(\ref*{enu:calculate-stat-count})(b)
	we have $C(\aaa)^{V[\der(X,\kappa,\vec{S})]}\con V$, so if $V=L[X]$
	we get our equality.
\end{proof}

\subsection{\label{subsec:Coding-into-V=00003DC(aa)}Coding a set into a model
	of $V=C(\protect\aaa)$}

In this section we follow the method of Zadro\.{z}ny in \cite{Zadrozny-IteratingOrdinalDefinability1983}
to code sets into a model satisfying ``$V=C(\aaa)$'' using iterated
club shooting.
\begin{theorem}
	\label{thm:coding}Let $V=L[A]$ for some set $A$ such
	that there are:
	\begin{enumerate}
		\item $\left\langle \kappa_{n}\mid n<\omega\right\rangle \in L$ an increasing sequence of successors of regular cardinals (of $V$) above $\kappa_{-1}=\sup A$
		      such that for every $n$ $2^{\kappa_{n-1}}<\kappa_{n}$.
		\item $\left\langle T_{n},\tilde{S}_{n}\mid n<\omega\right\rangle \in L$
		      such that for every $n$, $T_{n}\con E_{\omega}^{\kappa_{n}}$ is
		      stationary in $\kappa_{n}$ (in $V$), and $\tilde{S}_{n}\con E_{\omega}^{\kappa_{n}}\smin T_{n}$
		      is co-fat (again in $V$). It will also be convenient to assume that
		      $\tilde{S}_{n}\con\kappa_{n}\smin\kappa_{n-1}$.
		\item $\left\langle \vec{S}_{n}=\left\langle S_{n}^{\alpha}\mid\alpha<\kappa_{n}\right\rangle \mid n<\omega\right\rangle \in L$
		      such that for every $n$ $\vec{S}_{n}$ is a partition of $\tilde{S}_{n}$
		      into disjoint stationary sets.
	\end{enumerate}
	Then there is a forcing extension of $V$ satisfying ``$V=C(\aaa)$''.

\end{theorem}

\begin{remark}
	\begin{enumerate}
		\item Stationarity and fatness of the sets above are with respect to $V$
		      even though the sets are from $L$.
		\item The assumptions hold for any $A$ which is $L$-generic. See Proposition
		      \ref{cor:coding tools}.
		\item $L$ can be replaced by other canonical inner models which are provably
		      contained in $C(\aaa)$, such as the Dodd-Jensen core model (see \cite{IMEL2}).
	\end{enumerate}
\end{remark}
\begin{proof}
	We define inductively an iteration of club shooting forcings as follows.
	Set $\Der^{0}A=\left\{ 1\right\} $ and $\dot{\der}^{0}A=\check{A}$ (which
	is also considered as a $\Der^{0}A$-name). If we've inductively
	defined $\Der^{n}A$ and $\dot{\der}^{n}A$ as a $\Der^{n}A$-name
	for a subset of $\kappa_{n}$, then we let $\dot{S}_{n}$ be a $\Der^{n}A$-name
	for $\bigcup\left\{ S_{n}^{\alpha}\mid\alpha\in\dot{\der}^{n}A\right\} $,
	\[
		\Der^{n+1}A:=\Der^{n}A*\des{\dot{S}_{n}}=\Der^{n}A*\dot{\Der}(\dot{\der}^{n}A,\kappa_{n},\vec{S}_{n})^{.}
	\]
	and $\dot{\der}^{n+1}A=\dot{\der}(\dot{\der}^{n}A,\kappa_{n},\vec{S}_{n})$
	-- the name for the generic club forced by $\des{\dot{S}_{n}}$.
	Let $\Der^{*}A$ be the full support limit of the iteration and note
	that it satisfies the assumptions of Theorem \ref{Thm:it-club-shooting-countable},
	since at each stage we destroy sets disjoint from the $T_{n}$s.

	So, at stage $1$ we shoot a club $\der^{1}A$ through the complement
	of $\bigcup\left\{ S_{0}^{\alpha}\mid\alpha\in A\right\} $, thus
	destroying the stationarity of exactly these sets out of $\vec{S}_{0}$,
	so  we code $A$ into $C(\aaa)$. At stage $2$ we shoot a club through
	the complement of $\bigcup\left\{ S_{1}^{\alpha}\mid\alpha\in\der^{1}A\right\} $
	thus coding $\der^{1}A$ into $C(\aaa)$, and so on. After $\omega$
	many steps we catch our tail, so that the entire generic of $\Der^{*}A$
	is coded into $C(\aaa)$
	(Theorem \ref{Thm:it-club-shooting-countable}
	is used to show that what we coded at a certain stage won't be destroyed at subsequent stages).
	Let's see this formally.

	Let $G\con\Der^{*}A$ be generic. We claim that $G\in C(\aaa)^{V[G]}$
	so $V[G]=C(\aaa)^{V[G]}$.
	For every $n$ let $\der^{n}A=\big(\dot{\der}^{n}A\big )^{G}$, and it is clear from the construction that $G$ can be obtained from $\langle \der^{n}A\mid n<\omega\rangle $,
	so we need to show that this sequence is in $C(\aaa)$.
	For every $n$ and $\alpha<\kappa_{n}$, if $\alpha\in\der^{n}A$ then by the properties of the club shooting $S_{n}^{\alpha}$ is not stationary in $V[G]$,
	while if $\alpha\notin\der^{n}A$, by Theorem \ref{Thm:it-club-shooting-countable} $S_{n}^{\alpha}$ \emph{is }stationary in $V[G]$.
	So $\der^{n}A=\{ \alpha<\kappa_{n}\mid V[G]\vDash  S_{n}^{\alpha}\,\text{ is stationary}\} $
	which is in $C(\aaa)^{V[G]}$ since $\vec{S}_{n}\in L$.
	Since $\langle \kappa_{n},T_{n},\vec{S}_{n}\mid n<\omega\rangle \in L$,
	also the \emph{sequence }$\langle \der^{n}A\mid n<\omega\rangle $
	is in $C(\aaa)^{V[G]}$.
\end{proof}
\begin{proposition}
	\label{cor:coding tools}If $V=L[A]$ where $A$ is set-forcing
	generic over $L$ then the assumptions of Theorem \ref{thm:coding}
	hold.
\end{proposition}

\begin{proof}
	In this case, for large enough cardinals, $V$ agrees with $L$ on
	cardinalities and the notions of stationarity and fatness, and we
	have for large enough $\kappa>\sup A$ $2^{\kappa}=\kappa^{+}$ and
	$\square_{\kappa}$. So we can pick $\left\langle \kappa_{n}\mid n<\omega\right\rangle \in L$
	an increasing sequence of cardinals (of $V$) above $\kappa_{-1}=\sup A$
	which are successors of regular cardinals and such that for every
	$n$ $2^{\kappa_{n-1}}<\kappa_{n}$ and $\square_{\kappa_{n}^{-}}$
	holds (where $\kappa^{-}$ denotes the cardinal predecessor of a successor cardinal $\kappa$).
	Then for each $n$ we split $E_{\omega}^{\kappa_{n}}$
	into two disjoint stationary sets, take $T_{n}$ as one of them and
	apply Lemma \ref{lem:square-implis-cofat} to the other to obtain
	$\tilde{S}_{n}$, and then partition it into disjoint sets. All of
	this is done in $L$ but the desired properties are retained in $V$.
\end{proof}

In Theorem \ref{thm:coding} we only used a sequence of co-fat stationary
sets, but if we add the assumption that their complements form a \emph{strongly
	mutually fat }sequence, we can get a better result:
\begin{theorem}
	Let $V=L[A]$ for some set $A$ such that the assumptions
	of Theorem \ref{thm:coding} hold, and further assume that $\left\langle T_{n}\mid n<\omega\right\rangle $
	is strongly mutually fat. Then there is a forcing extension $W$
	of $V$ such that $C(\aaa)^{W}=W$ and $H(\kappa_{0})^{W}=H(\kappa_{0})^{V}$.
\end{theorem}

\begin{proof}
	We apply the same proof as of Theorem \ref{thm:coding}, but now by
	Theorem \ref{Thm:it-club-shooting-full}.\ref*{enu:distributivity-full}
	the forcing is ${<}\kappa_{0}$ distributive, so we get $H(\kappa_{0})^{W}=H(\kappa_{0})^{V}$.
\end{proof}

This means that any ``local'' statement that can be forced over
$L$, is consistent with $V=C(\aaa)$, where ``local'' is in fact
$\Sigma_{2}$ (cf. \cite[pg. 86]{Solovay.Reinhardt.ea-StrongAxiomsInfinity1978}, where the following is stated with $H(\theta)$ replaced by $V_\theta$ such that $\left| V_\theta \right| = \theta$):
\begin{fact}
	$\Phi$ is a $\Sigma_{2}$ sentence iff $\vdash\Phi\leftrightarrow\exists\theta(H(\theta)\vDash\Phi)$.
\end{fact}


\begin{corollary}
	If $\Phi$ is a $\Sigma_{2}$ statement, perhaps with ordinal parameters,
	which is forceable over $L$, then there is a forcing extension of
	$L$ satisfying $V=C(\aaa)+\Phi$.
\end{corollary}

\begin{proof}
	If $L[A]$ is a forcing extension of $L$ satisfying $\Phi$, then
	we can choose a sequence $K=\left\{ \kappa_{n}\mid n<\omega\right\} $
	large enough (above the $H(\theta)$ which satisfies $\Phi$) so that
	$L$ and $L[A]$ agree on all relevant notions, use Theorem \ref{thm:mut-fat-squar}
	to obtain  a sequence $\left\langle \bar{S}_{\kappa}\mid\kappa\in K\right\rangle $
	such that for every $\kappa\in K$ $\bar{S}_{\kappa}\con S_{\kappa}$
	is stationary and the sequence $\left\langle T_{\kappa}\mid\kappa\in K\right\rangle $
	where $T_{\kappa}=\kappa\smin\bar{S}_{\kappa}$ is strongly mutually
	fat. Then applying the previous theorem we get the desired extension.
\end{proof}
This gives us, for example, that for every $\kappa$ of uncountable
cofinality, $Con(\zfc)$ implies the consistency of $V=C(\aaa)+2^{\aleph_{0}}=\kappa$.
This is in stark contrast to the case of $C^{*}$, where $V=C^{*}$
implies that $2^{\aleph_{0}}\in\left\{ \aleph_{1},\aleph_{2}\right\} $
and for any $\kappa>\aleph_{0}$ $2^{\kappa}=\kappa^{+}$ (cf. \cite[Corollary to Theroem 5.20]{IMEL}).

\subsection{\label{subsec:Iterating-C(aa)}Iterating $C(\protect\aaa)$ }

In this section we want to use the coding construction to produce
models with decreasing iterations of $C(\aaa)$. First we note that
the construction above gives us finite iterations:
\begin{proposition}
	\label{prop:finite-descending}In the construction of Theorem \ref{thm:coding},
	for every $n<\omega$ $C(\aaa)^{L\left[\der^{n+1}A\right]}=L\left[\der^{n}A\right]$.
\end{proposition}

\begin{proof}
	Proposition \ref{cor:C(aa) of coding}.
\end{proof}
So at each finite step of the iteration, where we ``code $A$ $n$
times'', we get a decreasing sequence of $C(\aaa)$s of length $n$,
but after $\omega$ stages we don't get a decreasing sequence of length
$\omega$, but rather ``catch our tail'' and get ``$V=C(\aaa)$''
(this is the reason we denoted the final iteration above by $\Der^{*}A$
and not by $\Der^{\omega}A$, which we will use shortly). To get
an infinite decreasing sequence, we need to make sure that on each
step we lose something, but we still have infinitely many things to
lose. This is accomplished (also following Zadro\.{z}ny) by adding
$\omega$ many ``partial codings'' -- for each $n$ we code a set
$A_{n}$ ``$n$ many times'', so that taking $C(\aaa)$ will drop
the last coding of each $A_{n}$, giving us $n-1$ codings of $A_{n}$.
We will eventually lose all codings for $A_{n}$ after $n$ stages,
but we still have infinitely more $A_{m}$s for which there are more
codings to lose. In fact, the $A_{n}$s will actually be the same
set, which we code at different places so the codings are different.

\begin{theorem}
	\label{thm:descending-omega}Let $V=L[A]$ for some set
	$A$ such that the assumptions of Theorem \ref{thm:coding} hold.
	Then there is a forcing extension of $V$ satisfying: for every $n<\omega$
	$C(\aaa)^{n}\ne C(\aaa)^{n+1}$, $C(\aaa)^{\omega}\vDash\zfc$, and $C(\aaa)^{\omega+1}=C(\aaa)^{V}$.
\end{theorem}

\begin{proof}
	Given the sequences $\langle \kappa_{n},T_{n},\tilde{S}_{n},\vec{S}_{n}\mid n<\omega\rangle \in L$
	as in the assumptions,	we re-index them as
	\[
		Z = \langle \kappa_{n,i},T_{n,i},\tilde{S}_{n,i},\vec{S}_{n,i}\mid i\leq n<\omega\rangle \in L
	\]
	so that $\kappa_{n,i}<\kappa_{m,j}$ iff $\left(n,i\right)<_{\lex}\left(m,j\right)$.
	Note that this is in fact a well order of order-type $\omega$. Let
	$\kappa_{\omega}=\sup\left\{ \kappa_{n,i}\mid n<\omega,i\leq n\right\} $.
	For the sake of clarity we will now describe the forcings we use in
	a bit less formal way, not using names for the forcings or the sets
	added. We think of it as inductively defining the iterations in the
	corresponding extensions.

	For every $n$, denote $\der^{n,-1}A=A$ and for each $n$ and $0\leq i\leq n$
	inductively define
	\begin{align*}
		\Der^{n,i}A & :=\Der^{n,i-1}A*\Der(\der^{n,i-1}A,\kappa_{n,i},\vec{S}_{n,i}).
	\end{align*}
	Denote $\Der^{n}A=\Der^{n,n}A$, and let $\Der^{\omega}A$ be the
	full support \emph{product} of the $\Der^{n}A$s. Note that this can
	be viewed also as a countable iteration of club shooting forcings,
	so Theorem \ref{Thm:it-club-shooting-countable} applies. Denote
	the generic extension $L\left[\left\langle \der^{n,i}A\mid i\leq n<\omega\right\rangle \right]$
	by $W$. The idea is as follows -- for each $n$, $\Der^{n}A$ codes
	the set $A$ $n+1$ many times -- at the cardinals $\kappa_{n,0}\till\kappa_{n,n}$.
	An important note is that for $n\ne m$, $\Der^{n}A$ and $\Der^{m}A$
	are forced independently. So, when we take the $C(\aaa)$ of the extension,
	the last coding at each ``level'' will drop (see Figure \ref{fig:descending-omega}).
	But since we have infinitely many levels, we can repeat this procedure
	infinitely many times, and only then will all the codings drop.

	\begin{figure}
		\centerline{
			\begin{tikzpicture}
				\draw(-0.8,9.7) node{$V[G]=$};
				\foreach \t in {0,...,3} 
					{
						\def\z{\t*3} 
						\draw (\z,0) -- (\z,8);
						\draw [dashed] (\z,8 ) -- (\z,10);
						\draw (\z,9) node[left]{$C(\mathtt{aa})^{\t}$};
						\foreach \x in {0,...,3}
							{
								\foreach \y in {0,...,\x}
									{
										\draw (\z-0.1,2*\x+\y/2) -- (\z+0.1,2*\x+\y/2);
										\draw (\z,2*\x+\y/2) node[left]{$\kappa_{\x,\y}$};
										\draw (\z,2*\x+\y/2) node[right]{$\partial^{\x,\y}A$};
										\pgfmathparse{\x-\t}
										\ifnum \y>\pgfmathresult \relax
											\draw[red,thick] (\z+0.1,2*\x+\y/2-0.1) -- (\z+1.1,2*\x+\y/2+0.1);
										\fi
									}
							}
					}

				\draw[dotted,very thick](10,9) -- (11.3,9);
				\draw[dotted,very thick](10.6,5) -- (11.7,5);

				\def\t{4}
				\def\z{\t*3+1}
				\draw (\z,0) -- (\z,8);
				\draw [dashed] (\z,8 ) -- (\z,10);
				\draw (\z,9) node[left]{$C(\mathtt{aa})^{\omega}$};
				\foreach \x in {0,...,3} 
					{
						\foreach \y in {0,...,\x} 
							{
								\draw (\z-0.1,2*\x+\y/2) -- (\z+0.1,2*\x+\y/2);
								\draw (\z,2*\x+\y/2) node[left]{$\kappa_{\x,\y}$};
								\draw (\z,2*\x+\y/2) node[right]{$\partial^{\x,\y}A$};
								\pgfmathparse{\x-\t}
								\ifnum \y>\pgfmathresult \relax
									\draw[red,thick] (\z+0.1,2*\x+\y/2-0.1) -- (\z+1.1,2*\x+\y/2+0.1);
								\fi
							}
					}
			\end{tikzpicture}
		}

		\caption{\label{fig:descending-omega}A descending $C(\protect\aaa)$ sequence
			of length $\omega$}
	\end{figure}
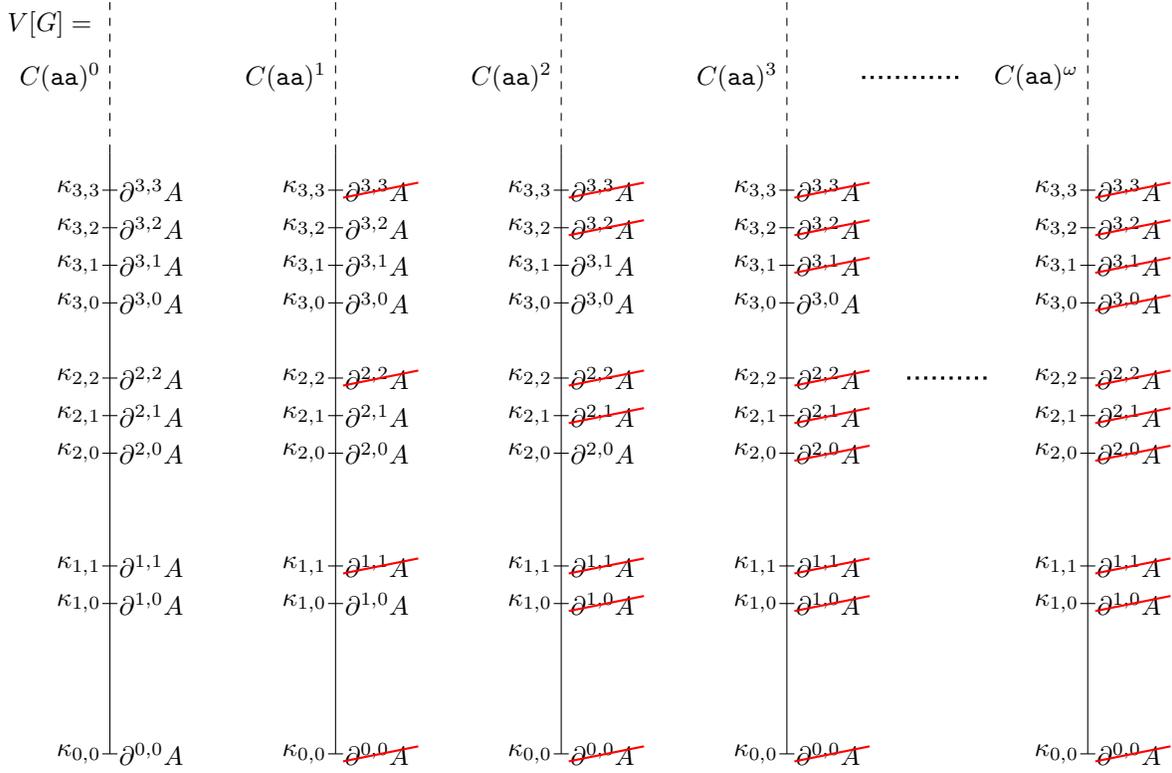

	Denote for every $k$ $W^{k}:=L\left[\left\langle \der^{n,i}A\mid k\leq n<\omega,i\leq n-k\right\rangle \right]$.
	\begin{claim}
		For every $k$, $C(\aaa)^{W^{k}}=W^{k+1}$.
	\end{claim}

	\begin{proof}
		$W^{k+1}=L\left[\left\langle \der^{n,i}A\mid k+1\leq n<\omega,i\leq n-k-1\right\rangle \right]$.
		So
		\[
			W^{k}=L\left[\left\langle \der^{n,i}A\mid k\leq n<\omega,i\leq n-k\right\rangle \right]=
			W^{k+1}\left[\left\langle \der^{n,n-k}A\mid k\leq n<\omega\right\rangle \right]
		\]
		where $Y:=\left\langle \der^{n,n-k}A\mid k\leq n<\omega\right\rangle $
		is generic over $W^{k+1}$ for an iteration of club shooting forcings
		of length $\omega$, destroying sets which are in $W^{k+1}$ (as they
		are computed from the previous stages). So we can apply Theorem \ref{Thm:it-club-shooting-countable}(\ref*{enu:calculate-stat-count})(b)
		to get that $C(\aaa)^{W^{k}}=C(\aaa)^{W^{k+1}[Y]}\con W^{k+1}$.
		On the other hand, as we discussed before, for every $n,i$ such that $k\leq n<\omega$ and $i<n-k-1$, $\der^{n,i+1}A \in W_k$ implies that
		$\der^{n,i}A\in C(\aaa)^{W^{k}}$,
		and using the entire sequence $Z \in L$ we get that also
		$\left\langle \der^{n,i}A\mid k+1\leq n<\omega,i\leq n-k-1\right\rangle \in C(\aaa)^{W_k}$.
		So we get equality.
	\end{proof}
	Thus we get that for every $k$, $\left(C(\aaa)^{k}\right)^{W}=L\left[\left\langle \der^{n,i}A\mid k\leq n<\omega,i\leq n-k\right\rangle \right]$
	and we get a descending sequence. Note that for every $k$, $A\in\left(C(\aaa)^{k}\right)^{W}$.

	Now we wish to analyze
	\[
		\left(C(\aaa)^{\omega}\right)^{W}=\bigcap_{k<\omega}\left(C(\aaa)^{k}\right)^{W}=\bigcap_{k<\omega}L\left[\left\langle \der^{n,i}A\mid k\leq n<\omega,i\leq n-k\right\rangle \right].
	\]
	For every $k$, denote by
	\[
		Q_{k}=\prod_{k\leq n<\omega}\bigast_{i\leq n-k}\dot{\Der}^{n,i}A
	\]
	the poset adding the generic object $\left\langle \der^{n,i}A\mid k\leq n<\omega,i\leq n-k\right\rangle $
	and by $B_{k}$ the complete Boolean algebra corresponding to it.
	We can without loss of generality assume that $B_{k+1}\con B_{k}$
	and clearly if $G$ is the generic for $B=B_{0}=\mathrm{ro}(\Der^{\omega}A)$,
	and $G_{k}=G\cap B_{k}$, then
	\[
		L[A][G_{k}]=L\left[\left\langle \der^{n,i}A\mid k\leq n<\omega,i\leq n-k\right\rangle \right].
	\]
	So, denoting $B_{\omega}=\bigcap_{k<\omega}B_{k}$, $G_{\omega}=G\cap B_{\omega}$,
	and using Fact \ref{fact:descending-generic}, we get that
	\[
		\left(C(\aaa)^{\omega}\right)^{W}=L[A][G_{\omega}]
	\]
	and in particular it satisfies $\zfc$, since $Q_{0}=\Der^{\omega}A$
	(and so also $B_{0}$) is $\omega$-distributive.

	To show that $\left(C(\aaa)^{\omega+1}\right)^{W}=C(\aaa)^{V}$ we
	want to show that $\left(C(\aaa)^{\omega}\right)^{W}$ agrees with
	$V$ on the notion of stationarity. For every $n<\omega$ and $k<l\leq n$,
	let
	\[
		g_{n}^{kl}:\bigast_{i\leq n-l}\dot{\Der}^{n,i}A\to\bigast_{i\leq n-k}\dot{\Der}^{n,i}A
	\]
	be the natural projection. Then this induces an $\omega$-normal
	system $f_{kl}:Q_{k}\to Q_{l}$ as in Lemma \ref{lem:kappa-closed},
	so we get that $\meq{Q_{0}}$ is $\sigma$-closed.  So by Lemma
	\ref{lem:sigma-closed}, for any $\lambda$ and $T\con\cpow{\lambda}$,
	$T\in V$, $T$ is stationary in $V$ iff it is stationary in $V[G_{\omega}]=\left(C(\aaa)^{\omega}\right)^{W}$.

	Now we can inductively prove that the stages of construction of $C(\aaa)^{V}$
	and $\left(C(\aaa)^{\omega+1}\right)^{W}$ are exactly the same, since
	in the successor step we always consider the stationarity of sets
	which are, by the induction hypothesis, sets in $V$, and this notion is the same in $V$ and in $\left(C(\aaa)^{\omega}\right)^{W}$.
	So indeed $\left(C(\aaa)^{\omega+1}\right)^{W}=C(\aaa)^{V}$.
\end{proof}

Note that if we apply this theorem to $V=L[\der A]$ instead
of $L[A]$, then we get that $\left(C(\aaa)^{\omega+1}\right)^{V}=C(\aaa)^{L[\der A]}=L[A]$.
This suggests that we can get longer  iterations, however to go past countable iterations
we would need the stronger distributivity properties provided by mutually
fat sets.
\begin{theorem}
	\label{thm:descending-long}Let $V=L[A]$ where $A$ is set-generic
	over $L$. Then for any ordinal $\delta$ there is a forcing extension
	of $V$ satisfying $\forall\alpha\leq\delta$ $C(\aaa)^{\alpha}\vDash\zfc$,
	$C(\aaa)^{\alpha}\ne C(\aaa)^{\alpha+1}$ and $C(\aaa)^{\delta+1}=L[A]$.
\end{theorem}

\begin{proof}
	As $V$ is a set-generic extension of $L$, there is some cardinal
	$\psi$ above which $V$ agrees with $L$ on cardinals and on the
	notion of stationarity, so in particular $\mathrm{GCH}$ and existence of global
	square hold above $\psi$. We assume this $\psi$ is $>\sup A \cup \delta$.
	Set $\theta=\psi^{+}$and fix sequences:
	\begin{enumerate}
		\item $K=\left\langle \kappa_{\eta}\mid\eta<\theta\right\rangle \in L$
		      an increasing sequence of successors of regular cardinals (of $V$)
		      above $\kappa_{-1}=\psi^{++}$  such that for every $\eta$ $2^{\kappa_{\eta-1}}<\kappa_{\eta}$.
		\item $\left\langle \bar{S}_{\eta}\mid\eta<\theta\right\rangle \in L$ a
		      sequence of stationary sets obtained as in Theorem \ref{thm:mut-fat-squar},
		      so for every $\eta$, $S_{\eta}\con E_{\omega}^{\kappa_{\eta}}$ is
		      stationary (in $V$), and the sequence $\left\langle T_{\eta}\mid\eta<\theta\right\rangle $
		      where $T_{\eta}=\kappa_{\eta}\smin\bar{S}_{\eta}$ is strongly mutually
		      fat (again in $V$).
		      It will also be convenient to assume that $\bar{S}_{\eta}\con\kappa_{\eta}\smin\kappa_{\eta}^{-}$.
		\item $\left\langle \vec{S}_{\eta}=\left\langle S_{\eta}^{\alpha}\mid\alpha<\kappa_{\eta}\right\rangle \mid\eta<\theta\right\rangle \in L$
		      such that for every $\eta$ $\vec{S}_{\eta}$ is a partition of $\bar{S}_{\eta}$
		      into disjoint stationary sets.
	\end{enumerate}
	These will be our ``coding tools''. We prove by induction on $\delta<\psi$
	that for every relevant $X$ (where which $X$s are ``relevant''
	is inductively defined as those $X$s which are used in the construction
	of previous stages) and any $\eta<\theta$ there is a notion of forcing,
	denoted $\Der^{\delta}(X,\eta)$, which is a full support iteration
	of club shooting forcings using cardinals from $\left\langle \kappa_{\alpha}\mid\alpha\in[\eta,\eta+1+\delta)\right\rangle $,
	such that the following holds:
	\begin{quote}
		\textbf{IH} If $Y$ is set-generic over $L$ such that $L[Y]$ agrees
		with $L$ on cardinalities and stationarity in the segment $\left\langle \kappa_{\alpha}\mid\alpha\in[\eta,\eta+1+\delta)\right\rangle $,
		and $\der^{\delta}(X,\eta)$ is generic over $L[Y]$, then in $L[Y][\der^{\delta}(X,\eta)]$
		the $C(\aaa)$-sequence has length at least $\delta+1$ and
		\[
			\left.C(\aaa)^{\delta+1}\right.^{L[Y][\der^{\delta}(X,\eta)]}=\left.C(\aaa)\right.^{L[Y]}.
		\]
	\end{quote}
	\begin{remark}
		\label{rem:larger iter}Note that if the above holds, and $\delta'=\delta+1+\gamma<\psi$ (we don't require
		anything on $\gamma$), then
		\begin{align*}
			\left(C(\aaa)^{\delta'}\right)^{L[Y][\der^{\delta}(X,\eta)]} 
        & =\left(C(\aaa)^{\delta+1+\gamma}\right)^{L[Y][\der^{\delta}(X,\eta)]}\\
			  & =\left(C(\aaa)^{\gamma}\right)^{\left(C(\aaa)^{\delta+1}\right){}^{L[Y][\der^{\delta}(X,\eta)]}} \\
			  & =\left(C(\aaa)^{\gamma}\right)^{\left.C(\aaa)\right.{}^{L[Y]}} \\
			  & =\left(C(\aaa)^{1+\gamma}\right)^{L[Y]}.
		\end{align*}
	\end{remark}

	The definition is as follows:
	\begin{enumerate}
		\item $\Der^{0}(X,\eta):=\Der(X,\kappa_{\eta},\vec{S}_{\eta})$.
		\item If $\delta=\alpha+\beta$ for $\alpha,\beta<\delta$, and $\beta$
		      is smallest such that this holds, then set
		      \[
			      \Der^{\delta}(X,\eta):=\Der^{\beta'}(X,\eta+1+\alpha)*\Der^{\alpha}(\der^{\beta'}(X,\eta+1+\alpha),\eta)
		      \]
		      where $\beta'=\beta-1$ if $\beta<\omega$ and otherwise $\beta'=\beta$.
		\item Otherwise, we can find in $\delta$ an increasing sequence of ordinals
		      $\left\langle \eta_{\alpha}\mid\alpha<\delta\right\rangle $
		      such that for every $\alpha<\delta$, $[\eta_{\alpha},\eta_{\alpha+1})$
		      has order-type $\alpha$. Then
		      \[
			      \Der^{\delta}(X,\eta):=\prod_{\alpha<\delta}\Der^{\alpha}(X,\eta+\eta_{\alpha}).
		      \]
	\end{enumerate}
	Let's see that this works. The initial step is clear. Assume $\delta=\alpha+\beta$.
	Then
	\[
		L[Y][\der^{\delta}(X,\eta)]=L[Y][\der^{\beta'}(X,\eta+1+\alpha)][\der^{\alpha}(\der^{\beta'}(X,\eta+1+\alpha),\eta)]
	\]
	where $\der^{\alpha}(\der^{\beta'}(X,\eta+1+\alpha),\eta)$ is generic over $L[Y][\der^{\beta'}(X,\eta+1+\alpha)]$ and the assumptions in
	\textbf{IH} hold,
	so $L[Y][\der^{\delta}(X,\eta)]$ satisfies that the $C(\aaa)$ sequence has length at least $\alpha+1$, and $C(\aaa)^{\alpha+1}=C(\aaa)^{L[Y][\der^{\beta'}(X,\eta+1+\alpha)]}$.
	Now again by \textbf{IH} the model $L[Y][\der^{\beta'}(X,\eta+1+\alpha)]$ has a $C(\aaa)$-sequence of length at least
	$\beta'+1$, with the $\beta'+1$ stage being $C(\aaa)^{L[Y]}$.
	Together we get a sequence of length at least $\alpha+1+\beta'+1=\alpha+\beta+1$
	(if $\beta$ is finite then $1+\beta'=1+\beta-1=\beta$ and if it
	is infinite then $1+\beta'=1+\beta=\beta$), and $\left.C(\aaa)^{\alpha+\beta+1}\right.^{L[Y][\der^{\delta}(X,\eta)]}=C(\aaa)^{L[Y]}$.

	Now, if $\Der^{\beta'}(X,\eta+\alpha)$ uses cardinals from
	\[
		\left\langle \kappa_{\gamma}\mid\gamma\in[\eta+1+\alpha,\eta+1+\alpha+1+\beta')\right\rangle
	\]
	and $\Der^{\alpha}(\der^{\beta'}(X,\eta),\eta)$  from
	\[
		\left\langle \kappa_{\gamma}\mid\gamma\in[\eta,\eta+1+\alpha)\right\rangle
	\]
	then $\Der^{\delta}(X,\eta)$ uses cardinals from
	\[
		\left\langle \kappa_{\gamma}\mid\gamma\in[\eta,\eta+1+\delta)\right\rangle .
	\]

	Consider now the last case (note that if $\delta=\omega$, this is
	exactly the construction in Theorem \ref{thm:descending-omega}).
	For any $\alpha,\beta<\delta$, $\alpha\ne\beta$, the forcings $\Der^{\alpha}(X,\eta+\eta_{\alpha})$
	and $\Der^{\beta}(X,\eta+\eta_{\beta})$ are independent of one another,
	that is we can use the product lemma, and the generic of $\Der^{\delta}(X,\eta)$
	is a disjoint union of generics for $\prod_{\beta<\alpha}\Der^{\beta}(X,\eta+\eta_{\beta})$,
	$\Der^{\alpha}(X,\eta+\eta_{\gamma})$ and $\prod_{\alpha<\beta<\delta}\Der^{\beta}(X,\eta+\eta_{\beta})$,
	which are mutually generic. So we have
	\[
		\left(C(\aaa)^{\alpha+1}\right)^{L[Y][\der^{\delta}(X,\eta)]}=\left(C(\aaa)^{\alpha+1}\right)^{L[Y][\bigcup_{\alpha\leq\beta<\delta}\der^{\beta}(X,\eta+\eta_{\beta})][\bigcup_{\beta<\alpha}\der^{\beta}(X,\eta+\eta_{\beta})]}
	\]

	Also note that the forcing $\prod_{\alpha<\beta<\delta}\Der^{\beta}(X,\eta+\eta_{\beta})$
	has the required properties to ensure that cardinalities and stationarity
	in the segment $\left\langle \kappa_{\gamma}\mid\gamma<\eta+1+\eta_{\alpha}\right\rangle $
	are preserved, so we can apply \textbf{IH}, and specifically Remark \ref{rem:larger iter},
	and inductively get that all the codings $\bigcup_{\beta\leq\alpha}\der^{\beta}(X,\eta+\eta_{\beta})$
	simply drop, i.e. that
	\begin{align*}
		 & \left(C(\aaa)^{\alpha+1}\right)^{L[Y][\bigcup_{\alpha\leq\beta<\delta}\der^{\beta}(X,\eta+\eta_{\beta})][\bigcup_{\beta<\alpha}\der^{\beta}(X,\eta+\eta_{\beta})]} \\
		 & =\left(C(\aaa)^{\alpha+1}\right)^{L[Y][\bigcup_{\alpha\leq\beta<\delta}\der^{\beta}(X,\eta+\eta_{\beta})]}                                                         \\
		 & =\left(C(\aaa)^{\alpha+1}\right)^{L[Y][\bigcup_{\alpha<\beta<\delta}\der^{\beta}(X,\eta+\eta_{\beta})][\der^{\alpha}(X,\eta+\eta_{\alpha})]}                       \\
		 & =\left.C(\aaa)\right.^{L[Y][\bigcup_{\alpha<\beta<\delta}\der^{\beta}(X,\eta+\eta_{\beta})]}
	\end{align*}

	so together we have
	\begin{align*}
		\left(C(\aaa)^{\alpha+1}\right)^{L[Y][\der^{\delta}(X,\eta)]} & =\left.C(\aaa)\right.^{L[Y][\bigcup_{\alpha<\beta<\delta}\der^{\beta}(X,\eta+\eta_{\beta})]}.
	\end{align*}

	This means that for every $\alpha<\delta$, we have a descending sequence
	of iterated $C(\aaa)$ of length at least $\alpha$, so we get a descending
	sequence of length $\delta$, which ends in
	\[
		\left(C(\aaa)^{\delta}\right)^{L[Y][\der^{\delta}(X,\eta)]}=\bigcap_{\alpha<\delta}\left.C(\aaa)\right.^{L[Y][\bigcup_{\alpha<\beta<\delta}\der^{\beta}(X,\eta+\eta_{\beta})]}.
	\]
	Recall that we are forcing with an club shooting iteration of length
	at most $\theta$ where $\delta<\theta<\kappa_{0}$, so by Theorem
	\ref{Thm:it-club-shooting-full}.\ref*{enu:distributivity-full} we
	work in a $\delta$-distributive forcing, so we can use the same methods
	as before to get that
	\[
		\left(C(\aaa)^{\delta}\right)^{L[Y][\der^{\delta}(X,\eta)]}=L[Y][\meq{\der^{\delta}(X,\eta)}]
	\]
	where $\sim$ is the equivalence relation derived from the $\delta$-normal
	system given by the projections
	\[
		\prod_{\alpha<\beta<\delta}\Der^{\beta}(X,\eta+\eta_{\beta})\to\prod_{\alpha'<\beta<\delta}\Der^{\beta}(X,\eta+\eta_{\beta})
	\]
	for $\alpha<\alpha'$. So as in the last part of the proof of Theorem
	\ref{thm:descending-omega}, we can prove that the stages of construction
	of $\left.C(\aaa)\right.^{L[Y]}$ and of $\left.C(\aaa)\right.^{L[Y][\meq{\der^{\delta}(X,\eta)}]}$
	are the same, so we get
	\begin{align*}
		\left.C(\aaa)^{\delta+1}\right.^{L[Y][\der^{\delta}(X,\eta)]} & =\left.C(\aaa)\right.^{\left(C(\aaa)^{\delta}\right)^{L[Y][\der^{\delta}(X,\eta)]}} \\
		                                                              & =\left.C(\aaa)\right.^{L[Y][\meq{\der^{\delta}(X,\eta)}]}                           \\
		                                                              & =\left.C(\aaa)\right.^{L[Y]}.
	\end{align*}

	This concludes the construction of the coding forcings $\Der^{\delta}(X,\eta)$.
	Now we get that in the model $L[\der^{\delta}(\der(A,0),1)]$ the
	$C(\aaa)$ sequence is of length at least $\delta+1$ and
	\begin{align*}
		\left.C(\aaa)^{\delta+1}\right.^{L[\der^{\delta}(\der(A,0),1)]} & =\left.C(\aaa)\right.^{L[\der(A,0)]}=L[A]
	\end{align*}
	as required.
\end{proof}

\section{Open questions}

The first obvious question is whether the last result can be pushed
to obtain an $\ord$ length iterated $C(\aaa)$ sequence. The first
obstacle to this is distributivity -- recall that in order to get
distributivity we need to have iterations which are shorter than the
first cardinal used for the coding. So to get longer iterations we'll
need to choose larger and larger coding cardinals. This might be possible,
but it is not straightforward. Additionally we'd need an analysis
of class length iterations for this type of forcing, and this is beyond
the scope of this paper. So the following is still open:
\begin{question}
	Is it consistent to have a model with an $\ord$ length $C(\aaa)$
	sequence?
\end{question}

A second questions arises from the fact that we force over $L$, and
use $L$'s $\square$-principle to get the mutually fat sets required
for the iteration. In section \ref{subsec:Forcing-non-reflecting-stationar}
we show that such sets can also be obtained by forcing, but then it
is not clear whether we can get those sets which we use as ``coding
tools'' into $C(\aaa)$ to begin with. This raises the question whether
large cardinals or failure of $\square$ might restrict the iterated
$C(\aaa)$ sequence. However, the results of \cite{Yaar-IteratingCofinalityoConstructible2023}
show that  in the case of $C^{*}$, the restriction of the length
of the $C^{*}$ sequence comes from \emph{lacking} large cardinals,
and large cardinals enable \emph{longer} sequences. In any case,
the following are open:
\begin{question}
	\begin{enumerate}
		\item    Can we force models with long $C(\aaa)$ sequence over any model of $\zfc$?
		\item Do large cardinals determine the possible length of such sequences?
		\item What is the length of the $C(\aaa)$ sequence in canonical models for large cardinals such as $L^{\mu}$?
	\end{enumerate}
\end{question}

In \cite{IMEL2} it is shown that under the assumption of a proper
class of Woodin cardinals the theory of $C(\aaa)$ is
set-forcing absolute. However, the length of the $C(\aaa)$ sequence
is \emph{prima facie} not a first-order statement, so it isn't clear
that this length will be preserved. However it is still worth investigating:
\begin{question}
	Does the assumption of a proper class of Woodin cardinals determine the length of the $C(\aaa)$ sequence?
\end{question}

A different kind of inquiry, in light of the results we mentioned
in the beginning about $\mathrm{HOD}$, is the following:
\begin{question}
	Is it consistent relative to $\zfc$ that the $C(\aaa)$ sequence is
	of length $\omega$, and either:
	\begin{itemize}
		\item $C(\aaa)^{\omega}\vDash \zf+\mathrm{\neg AC}$?
		\item $C(\aaa)^{\omega}\nvDash\zf$?
	\end{itemize}
\end{question}

A different line of inquiry stems from the observation that in our results we only used stationary subsets of ordinals in our coding schemes, rather than the more general notion of stationarity for sets of countable sets.
This suggests that the same results could be obtained for a model constructed using only the notion of stationarity for ordinals.
As the notion of stationarity for ordinals doesn't simply apply to general structures, defining this model in the form of $C(\LL)$ for some logic $\LL$ requires some care, and perhaps using the relativized hierarchy $L(A)$ for a predicate $A$ would be a more natural approach.
In any case, this is beyond the scope of this paper, and would be the focus of future research.

Finally, we have introduced the new notion of \emph{mutually fat set}s,
which we believe is worth further investigation in itself. The first
questions concern the difference between the various notions we introduced:
\begin{question}
	Let $K$ be an increasing sequence of regular uncountable cardinals,
	$\left|K\right|<\min K$, $\left\langle T_{\kappa}\mid\kappa\in K\right\rangle $
	a sequence with $T_{\kappa}\con\kappa$.
	\begin{enumerate}
		\item Assume $\left\langle T_{\kappa}\mid\kappa\in K\right\rangle $ are
		      mutually stationary and each $T_{\kappa}$ is fat. Does this imply
		      that the sequence $\left\langle T_{\kappa}\mid\kappa\in K\right\rangle $ is
		      \textbf{mutually} fat?
		\item Assume $\left\langle T_{\kappa}\mid\kappa\in K\right\rangle $ is
		      mutually fat. Does it imply that it is \textbf{strongly} mutually
		      fat?
	\end{enumerate}
\end{question}

We have used $\square$-sequences and forcing non-reflecting stationary
sets to obtain mutually fat sequences. It is worth investigating what
other methods are there for obtaining such sequences.

\section*{Acknowledgments}
I would like to thank my supervisor, Prof. Menachem Magidor, for his guidance and support without which this work would not have been possible.
I would also like to thank the anonymous referees of my PhD thesis and this paper for their helpful comments, corrections and suggestions.

Revision of this paper was done with the support of the European Research Council (ERC) under the European Union’s Horizon 2020 research and innovation programme (grant agreement No. 101020762).

\bibliographystyle{amsplain}
\bibliography{bibCaa}

\end{document}